\newtheorem{Thm}{Theorem}[section]
\newtheorem{Prop}[Thm]{Proposition}
\newtheorem{Lem}[Thm]{Lemma}
\newtheorem{Cor}[Thm]{Corollary}
\newtheorem{prop-def}[Thm]{Proposition-Definition}
\theoremstyle{definition}
\newtheorem{Def}[Thm]{Definition}
\newtheorem{Remark}[Thm]{Remark}
\newtheorem{Exam}[Thm]{Example}
\newtheorem{Ques}[Thm]{Question}
\newtheorem{Crit}[Thm]{Criterion}
\newcommand{\nc}{\newcommand}
\nc{\delete}[1]{{}}
\nc{\mlabel}[1]{\label{#1}}  % Use this to suppress names
\nc{\mcite}[1]{\cite{#1}}  % Use this to suppress names
\nc{\mref}[1]{\ref{#1}}  % Use this to suppress names
\nc{\meqref}[1]{\eqref{#1}}  % Use this to suppress names
\nc{\mbibitem}[1]{\bibitem{#1}} % Use this to show number
	\nc{\mlabel}[1]{\label{#1} {{\emph{{\ }\ (#1)}}}}				 % Use this lines to show names
	\nc{\mcite}[1]{\cite{#1}{{\emph{{\ }(#1)}}}}  % Use this lines to show names
	\nc{\mref}[1]{\ref{#1}{{\emph{{\ }(#1)}}}}  % Use this lines to show names
	\nc{\meqref}[1]{\eqref{#1}{{\emph{{\ }(#1)}}}}  % Use this lines to show names
	\nc{\mbibitem}[1]{\bibitem[\bf #1]{#1}} % Use this to show name
\nc{\mrm}[1]{{\rm #1}}
\nc{\name}[1]{{\bf #1}}
\nc{\tforall}{\ \text{for all }}
\nc{\la}{\longrightarrow}
\nc{\ot}{\otimes}
\nc{\rar}{\rightarrow}
\nc{\ac}{\mathrm{\textup{!`}}}
\nc{\Mod}{\mathrm{Mod~}}
\nc{\proj}{\mathrm{proj}}
\nc{\inj}{\mathrm{inj}}
\nc{\Proj}{\mathrm{Proj}}
\nc{\Alg}{{\mathrm{Alg}}}
\nc{\bbL}{{\mathbb{L}}}
\nc{\bbR}{{\mathbb{R}}}
\nc{\bfk}{{\bf k}}
\nc{\C}{{\mathrm{C}}}
\nc{\Cone}{\mrm{Cone}}
\nc{\DA}{{\mathsf{DA}_\lambda}}
\nc{\Dif}{{{}_\lambda\!\mathfrak{Dif}}}
\nc{\Difinfty}{{{}_\lambda\!\mathfrak{Dif}_\infty}}
\nc{\DO}{{\mathsf{DO}_\lambda}}
\nc{\dg}{\mrm{dg}}
\nc{\End}{\mrm{End}}
\nc{\Ext}{\mrm{Ext}}
\nc{\Fil}{\mrm{Fil}}
\nc{\Fr}{\mrm{Fr}}
\nc{\Frob}{\mrm{Frob}}
\nc{\Gal}{\mrm{Gal}}
\nc{\GL}{\mrm{GL}}
\nc{\Hom}{\mrm{Hom}}
\nc{\Tor}{\mrm{Tor}}
\nc{\Hoch}{\mrm{Hoch}}
\nc{\HC}{\mrm{HC}}
\nc{\hsr}{\mrm{H}}
\nc{\hpol}{\mrm{HP}}
\nc{\im}{\mrm{Im}}
\nc{\K}{\mrm{K}}
\nc{\Ker}{\mrm{Ker}}
\nc{\rad}{\mrm{rad}}
\nc{\pd}{\mrm{pd}}
\nc{\Id}{\mrm{Id}}
\nc{\Irr}{\mrm{Irr}}
\nc{\incl}{\mrm{incl}}
\nc{\length}{\mrm{length}}
\nc{\NLSW}{\mrm{NLSW}}
\nc{\Lie}{\mrm{Lie}}
\nc{\mchar}{\rm char}
\nc{\mpart}{\mrm{part}}
\nc{\ql}{{\QQ_\ell}}
\nc{\qp}{{\QQ_p}}
\nc{\rank}{\mrm{rank}}
\nc{\thick}{\mrm{thick}}
\nc{\rcot}{\mrm{cot}}
\nc{\rdef}{\mrm{def}}
\nc{\rdiv}{{\rm div}}
\nc{\rmH}{ {\mathrm{H}}}
\nc{\rtf}{{\rm tf}}
\nc{\rtor}{{\rm tor}}
\nc{\res}{\mrm{res}}
\nc{\Sh}{{\mathrm{Sh}}}
\nc{\SL}{\mrm{SL}}
\nc{\Spec}{\mrm{Spec}}
\nc{\sg}{{\mathrm{sg}}}
\nc{\sgn}{{\mathrm{sgn}}}
\nc{\tor}{\mrm{tor}}
\nc{\Tr}{\mrm{Tr}}
\nc{\tr}{\mrm{tr}}
\nc{\wt}{\mrm{wt}}
\nc{\op}{\mrm{op}}
\nc{\cpx}[1]{#1^{\bullet}}
\nc{\MAA}{{\mathbb A}}   \nc{\BB}{{\mathbb B}}
\nc{\CC}{{\mathbb C}}
\nc{\DD}{{\mathbb D}}   \nc{\EE}{{\mathbb E}}
\nc{\FF}{{\mathbb F}}   \nc{\GG}{{\mathbb G}}
\nc{\HH}{ \mathrm{HH}}   \nc{\LL}{{\mathbb L}}
\nc{\NN}{{\mathbb N}}   \nc{\PP}{{\mathbb P}}
\nc{\QQ}{{\mathbb Q}}   \nc{\RR}{{\mathbb R}}
\nc{\TT}{{\mathbb T}}   \nc{\VV}{{\mathbb V}}
\nc{\ZZ}{{\mathbb Z}}   \nc{\TP}{\widetilde{P}}
\nc{\m}{{\mathbbm m}}   \nc{\MSS}{{\mathbb S}}
\nc{\cala}{{\mathcal A}}    \nc{\calc}{{\mathcal C}}
\nc{\cald}{\mathcal{D}}     \nc{\cale}{{\mathcal E}}
\nc{\calf}{{\mathcal F}}    \nc{\calg}{{\mathcal G}}
\nc{\calh}{{\mathcal H}}    \nc{\cali}{{\mathcal I}}
\nc{\call}{{\mathcal L}}    \nc{\calm}{{\mathcal M}}
\nc{\caln}{{\mathcal N}}    \nc{\calo}{{\mathcal O}}
\nc{\calp}{{\mathcal P}}    \nc{\calr}{{\mathcal R}}
\nc{\cals}{{\mathcal S}}    \nc{\calt}{{\mathcal T}}
\nc{\calv}{{\mathcal V}}    \nc{\calw}{{\mathcal W}}
\nc{\calx}{{\mathcal X}}
\nc{\fraka}{{\mathfrak a}}
\nc{\frakb}{\mathfrak{b}}
\nc{\frakg}{{\frak g}}
\nc{\frakl}{{\frak l}}
\nc{\fraks}{{\frak s}}
\nc{\frakB}{{\frak B}}
\nc{\frakm}{{\frak m}}
\nc{\frakM}{{\frak M}}
\nc{\frakp}{{\frak p}}
\nc{\frakW}{{\frak W}}
\nc{\frakX}{{\frak X}}
\nc{\frakS}{{\frak S}}
\nc{\frakA}{{\frak A}}
\nc{\frakC}{{\frak{C}}}
\nc{\frakx}{{\frakx}}
\nc{\frakt}{{\mathfrak{T}}}
\nc{\vspa}{\vspace{-.1cm}}
\nc{\vspb}{\vspace{-.2cm}}
\nc{\vspc}{\vspace{-.3cm}}
\nc{\vspd}{\vspace{-.4cm}}
\nc{\vspe}{\vspace{-.5cm}}
\nc{\ra}{\rightarrow}
\nc{\laa}{\leftarrow}
\nc{\lra}{\longrightarrow}
\nc{\lraf}[1]{\stackrel{#1}{\lra}}
\nc{\raf}[1]{\stackrel{#1}{\ra}}
\nc{\laf}[1]{\stackrel{#1}{\laa}}
\nc{\D}[1]{\mathrm{D}(#1)}
\nc{\Db}[1]{\mathrm{D}^b(#1)}
\nc{\Kb}[1]{{\mathrm K}^b(#1)}
\nc{\per}{\mathsf {per}}
\nc{\modcat}[1]{\ensuremath{\text{{\rm mod}}}(#1)}
\nc{\Modcat}[1]{\ensuremath{\text{{\rm Mod}}}(#1)}
\nc{\pmodcat}[1]{\mbox{{\rm proj}}(#1)}
\nc{\imodcat}[1]{\mbox{{\rm inj}}(#1)}
\nc{\gpmodcat}[1]{\mbox{{\rm Gproj}}(#1)}
\nc{\stmodcat}[1]{\mbox{{\rm {\underline{mod}}}}(#1)}
\nc{\stgp}[1]{\mbox{{\rm {\underline{Gproj}}}}(#1)}
\nc{\Sg}{\mathrm{D}^b_{\mathrm{sg}}}
\nc{\Gdef}{\mathrm{D}^b_{\mathrm{def}}}
\nc{\Res}{\mathrm{Res}}
\nc{\Gproj}{\mathrm{Gproj}}
\nc{\fGd}[1]{\mathrm{D}^b_{\rm fGd}(#1)}
\nc{\li}[1]{\textcolor{red}{#1 }}
\nc{\lir}[1]{\textcolor{red}{\underline{Li:}#1 }}
\nc{\gd}[1]{\textcolor{purple}{#1 }}
\nc{\kai}[1]{\textcolor{blue}{#1}}
\begin{document}
\title[Bounded extensions]{Categorical properties and homological conjectures for bounded extensions of algebras}

\author{Yongyun Qin, Xiaoxiao Xu, Jinbi Zhang* and Guodong Zhou}
\address{Yongyun Qin, School of Mathematics, Yunnan Key Laboratory of Modern Analytical Mathematics and Applications,
Yunnan Normal University, Kunming 650500, China}
\email{qinyongyun2006@126.com}

\address{Xiaoxiao Xu and Guodong Zhou, School of Mathematical Sciences, Key Laboratory of Mathematics and Engineering Applications of Ministry of Education, Shanghai Key Laboratory of PMMP, East China Normal University, Shanghai 200241, China}
\email{52275500015@stu.ecnu.edu.cn, gdzhou@math.ecnu.edu.cn}

\address{Jinbi Zhang, School of Mathematical Sciences, Anhui University, Hefei 230601, China}
\email{zhangjb@ahu.edu.cn}

\date{\today}

\begin{abstract}
Following Cibils, Lanzilotta, Marcos, and Solotar, an extension $B\subset A$ of finite dimensional algebras is bounded if the $B$-$B$-bimodule $A/B$ is $B$-tensor nilpotent, its projective dimension is finite and $\Tor_i^B(A/B, (A/B)^{\otimes_B j})=0$ for all $i, j\geq 1$. We show that for a bounded extension $B\subset A$, the algebras $A$ and $B$ are singularly equivalent of Morita type with level. Additionally, under mild conditions, their stable categories of Gorenstein projective modules and Gorenstein defect categories are equivalent, respectively. Some homological conjectures are also investigated for bounded extensions, including Auslander-Reiten conjecture,  finitistic dimension conjecture,  Fg condition,  Han's conjecture, and  Keller's conjecture. Applications to  trivial extensions and triangular matrix algebras are given.

In course of proof, we give some handy criteria for a functor between module categories to  induce  triangle functors between stable categories of Gorenstein projective modules and Gorenstein defect categories, which generalise some known criteria,  and hence  might be of independent interest.
\end{abstract}

\subjclass[2010]{
16E35   %Derived categories and associative algebras
16E30   %Homological functors on modules (Tor, Ext, etc.) in associative algebras
16E40   %(co)homology of rings and algebras
16E65   %Homological conditions on associative rings (generalizations of regular, Gorenstein, Cohen-Macaulay rings, etc.)
%16G     %Representation theory of associative rings and algebras
16G50   %Cohen-Macaulay modules in associative algebras
16D20   %Bimodules in associative algebras
%18E35Localization of categories, calculus of fractions
}

\keywords{Gorenstein defect category; Homological conjecture; Ring extension;  Singular equivalence of Morita type with level;  Singularity category.\\
\text{      } *Corresponding author}

\maketitle

\vspace{-.7cm}

 \tableofcontents

\vspace{-.7cm}

\allowdisplaybreaks

\section*{Introduction}
Let $A$ be a finite dimensional algebra over a field $\bfk$. Denote by  $\Modcat{A}$ (resp.  $\modcat{A}$, $\proj (A)$,  $\inj (A)$) the category of left $A$-modules (resp.  finitely generated left $A$-modules, finitely generated projective left $A$-modules, finitely generated   injective  left $A$-modules).
Let $\K(A)=\K(\Modcat{A})$ be the homotopy category of  $\Modcat{A}$, $\Kb{\proj{A}}$ be the bounded  homotopy category of $\proj (A)$,  $\D{A}=\D{\Modcat{A}}$ be  the unbounded derived category of  $\Modcat{A}$, and by $\Db{\modcat{A}}$ be the derived category of bounded complexes over $\modcat{A}$.
The \emph{singularity category} $\Sg(A)$ of $A$ is defined to be the Verdier quotient
$$\Sg(A):=\Db{\modcat{A}}/\Kb{\proj{A}},$$
see \cite{Buch21,O04}. The singularity category measures the homological singularity of an algebra in the sense that an algebra $A$ has finite global dimension if and only if its singularity category $\Sg(A)$ is trivial.

The singularity category also  captures the stable homological features of an algebra $A$ (\cite{Buch21}). Denote by $\stgp{A}$ the stable category of finitely generated  Gorenstein projective $A$-modules. Following \cite{Buch21}, there is a natural triangle functor $F:\stgp{A}\rightarrow \Sg(A)$  which is always fully faithful, and $F$ is an equivalence if and only if $A$ is Gorenstein (see also \cite{Hap91}). Inspired by this, Bergh, J{\o}rgensen, and Oppermann  (\cite{BJO15}) defined the \emph{Gorenstein defect category}  as the Verdier quotient
$$\Gdef(A):=\Sg(A)/{\rm Im}(F)$$  and proved that $A$ is Gorenstein if and only if $\Gdef(A)=0$. This means that the Gorenstein defect category of $A$ measures how far the algebra $A$ is from being Gorenstein.

The essential image of the functor $F$ is also clear. Denote by $\fGd{A}$ the full subcategory of
$\Db{\modcat{A}}$ formed by all complexes which are  quasi-isomorphic to bounded complexes of Gorenstein projective modules, that is,
   $\fGd{A}$ is   the thick subcategory of $\Db{\modcat{A}}$ generated by $\gpmodcat{A}$,  see \cite[Theorem 2.7]{LHZ22}.
The following triangle equivalence
$$ \stgp{A} \simeq \fGd{A}/\Kb{\pmodcat{A}}$$
is well known, see, for instance,  \cite[Theorem 4.4.1]{Buch21},
 \cite[Theorem 4]{PZ15},  \cite[Lemma 4.1]{CR20}, or in a much more general setup \cite[Theorem 3.7]{YZ22}.
 We will see that the  triangulated category  $\fGd{A}$ plays an important role in this paper.

Two algebras $A$ and $B$ are called \emph{singularly equivalent} provided that there is a triangle equivalence between $\Sg(A)$ and $\Sg(B)$, which  is called a \emph{singular equivalence} between $A$ and $B$. In \cite{Wang15}, Wang introduced the notion of singular equivalences of Morita type with level, which is a generalisation of stable equivalences of Morita type in the sense of Brou\'e \cite{Bro90} and  singular equivalences of Morita type in the sense of Chen and Sun  \cite{CS12} (see also \cite{ZZ13}).
\begin{Def}[\cite{Wang15}]Given two finite dimensional $\bfk$-algebras $A$ and $B$, and let $M$ be an $A$-$B$-bimodule and $N$ be a $B$-$A$-bimodule. We say that $(_AM_B, {_BN_A})$ defines a singular equivalence of Morita type with level $l$ for some $l\in \mathbb{N}$ if $_AM_B$ and $ {_BN_A}$ are projective on both sides, and the following conditions are satisfied:
$$M\otimes_B N\cong \Omega^l_{A^e}(A)\mbox{ in } \stmodcat{A^e}\mbox{ and } N\otimes_AM\cong \Omega^l_{B^e}(B)\mbox{ in }\stmodcat{B^e},$$
where $\Omega^l_{A^e}(-)$ and $\Omega^l_{B^e}(-)$ denote the syzygy endofunctor of the stable module category of $A^e$ and $B^e$, respectively.
 \end{Def}
 In this situation, there is a triangle equivalence $$M\otimes_B-:\Sg(B)\ra \Sg(A)$$ with quasi-inverse
$$[l]\circ (N\otimes_A-):\Sg(A)\ra \Sg(B).$$

This kind of singular equivalences provide rich structural information, and play an important role in the study of singular equivalences and homological properties. For example, it was verified in \cite{Wang15}  and \cite{CLW20} that the finitistic dimension conjecture and Keller's conjecture for singular Hochschild cohomology are invariant under singular equivalences of Morita type with level. Thus we are interested in constructing singular equivalences of Morita type with level.

In \cite{CLW23}, Chen, Liu, and Wang proved that a tensor functor with a bimodule defines a singular equivalence of Morita type with level under certain conditions, and in \cite{Dal21}, Dalezios proved that for certain Gorenstein algebras, a singular equivalence induced from tensoring with a complex of bimodules always induces a singular equivalence of Morita type with level. Qin in \cite{Qin22} gave a complex version of Chen-Liu-Wang's work, which extends Dalezios' result to arbitrary algebras (not limited to Gorenstein algebras).

Given an algebra $A$, there is an exact sequence of triangulated categories:
$$0\ra \stgp{A}\ra \Sg(A)\ra \Gdef(A) \ra 0.$$
A natural and fundamental question is: When do two related algebras share the same stable category of Gorenstein projective modules, singularity category and Gorenstein defect category up to triangle equivalence?
For partial answers to this question, we refer to \cite{Chen09, Chen14,  LHZ22, Lu17, Qin23, ZLM24}. In this paper, we also give a partial answer,
that is, we will compare  stable category of Gorenstein projective modules, singularity category and Gorenstein defect category
between two algebras linked by an algebra extension. From \cite{CLMS22}, an extension $B \subset A$ is called
{\it left (resp. right) bounded} if $A/B$ has finite projective
dimension as a $B$-$B$-bimodule, $A/B$ is projective as a left (resp. right) $B$-module and
some tensor power $(A/B)^{\otimes _Bp}$
is 0. Inspired by this, we consider a more general notion  suggested by Cibils, Lanzilotta, Marcos, and Solotar in \cite{CLMS22erratum}:
\begin{Def}[{\cite{CLMS22erratum}}]\label{def-bound-exte} An extension $B\subset A$ of finite dimensional algebras is called bounded if
\begin{itemize}
\item[$(1)$] there exists $p\geq 1$ such that the tensor power $(A/B)^{\otimes _Bp}$ vanishes;

\item[$(2)$] $A/B$ has finite projective dimension as a $B$-$B$-bimodule;

\item[$(3)$] $\Tor_i^B(A/B, (A/B)^{\otimes_B j})=0$ for all $i, j\geq 1$.

\end{itemize}
\end{Def}
Examples of bounded extension in the sense of the above definition but not in the sense of \cite{CLMS22} are given in
Proposition~\ref{Prop-bounded-exten} and Example~\ref{Ex-bounded-extensions}.

   Let ${\rm Gproj}(A)^{\bot}:=\{X\in \modcat{A}\mid \Ext_{A}^{i}(U,X)=0 \mbox{ for all } U\in \gpmodcat{A},\, i\ge 1\}$.
The first main  result we obtain in this paper  is as follows.
\begin{Thm}[Theorems \ref{main-thm-semt}, \ref{main-thm-semt2},
  \ref{main-thm-def1} and \ref{main-thm-def2}]
\label{main-thm}
Let $B\subset A$ be a bounded  extension. Then
$${_A}A\otimes^{\mathbb{L}}_{B}-: \Sg(B)\rightleftharpoons\Sg(A):  \Res^A_B=_BA\otimes _A- $$ is an adjoint triangle equivalence which is in fact a singular equivalence of Morita type with level between $A$ and $B$, and the functor $\Res^A_B:\modcat{A}\rightarrow \modcat{B}$ is an eventually homological isomorphism in the sense of \cite{PSS14}.

 Moreover, there are triangle equivalences
$${}_AA\otimes_B-: \stgp{B}\ra  \stgp{A}\mbox{ and }  _AA\otimes^{\mathbb{L}}_B-: \Gdef(B)\rightleftharpoons  \Gdef(A): \Res^A_B$$
if  one of the following two conditions holds:
\begin{itemize}
\item[$(1)$] $\mathbb{R}\Hom_B(A,B)$ is quasi-isomorphic to a bounded complex with each term in ${\rm Gproj}(A)^{\bot}$;

\item[$(2)$] $_BB$ is a direct summand of $_BA$. \end{itemize}
\end{Thm}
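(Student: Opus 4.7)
The plan centers on the short exact sequence
$0\to B\to A\to A/B\to 0$
of $B$-$B$-bimodules and an iterative use of the three boundedness conditions. First I would prove by induction on $j$ that $\Tor^B_i(A,(A/B)^{\otimes_B j})=0$ for all $i\ge 1$ and $j\ge 0$, using the long exact sequence obtained by tensoring the above SES with $(A/B)^{\otimes_B j}$ together with hypothesis~(3); iterating, $\Tor^B_i(A,A^{\otimes_B n})=0$ for all $n\ge 1$, so in particular $A$ has finite projective dimension as a right (and as a left) $B$-module. Consequently both ${}_AA\otimes^{\mathbb{L}}_B-$ and $\Res^A_B$ preserve perfect complexes and descend to functors between the singularity categories, and the standard derived tensor-hom adjunction on $\D{-}$ descends to the displayed adjunction on $\Sg(-)$.

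To upgrade this adjunction to an equivalence and extract the Morita-type level, I would assemble an explicit bounded complex $K^{\bullet}$ of $B^e$-modules whose terms are $B,A,A\otimes_B A,\dots,A^{\otimes_B(p-1)}$ and whose differentials are built from the unit $B\to A$. The tensor-nilpotency of $A/B$, together with the Tor-vanishings already established, makes the associated graded of $K^{\bullet}$ a bounded resolution expressing $B$ in terms of iterated tensor powers of $A/B$, each of which has finite bimodule projective dimension by~(2). After derived-tensoring with $A$ on the right (and running the symmetric argument on the left), this yields an explicit stable isomorphism $A\otimes^{\mathbb{L}}_B A\simeq \Omega^l_{A^e}(A)$ in $\stmodcat{A^e}$ for an integer $l$ bounded in terms of $\pd_{B^e}(A/B)$ and $p$, which is exactly the data for Wang's singular equivalence of Morita type with level. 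The eventually homological isomorphism statement follows in the style of \cite{PSS14}: the natural map $\Ext^n_A(X,Y)\to \Ext^n_B(\Res X,\Res Y)$ becomes an isomorphism once $n$ exceeds this same bound.

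For the Gorenstein projective stable category and Gorenstein defect category equivalences, I would invoke the ``handy criteria'' advertised in the abstract, which reduce the problem to checking that ${}_AA\otimes_B-$ sends $\gpmodcat{B}$ into $\gpmodcat{A}$ (which follows from the Tor-vanishings and the totally-acyclic characterisation of Gorenstein projectivity) together with a compatibility of the adjunction with the thick subcategory $\fGd{-}$. Either alternative hypothesis supplies the missing compatibility: under~(2), ${}_BB$ is a direct summand of ${}_BA$, so $\Res^A_B$ preserves Gorenstein projectivity and $\fGd{-}$ directly; under~(1), the $\gpmodcat{A}^{\bot}$-condition on $\mathbb{R}\Hom_B(A,B)$ plays the dual role via the right adjoint of restriction, which is precisely what the criteria demand.

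The main obstacle is the construction in the second paragraph: extracting the explicit stable bimodule isomorphism that realises the level $l$ from the three hypotheses. They are tailor-made to make this extraction possible, but careful bookkeeping is required to track which tensor products are derived versus ordinary, and to translate finite bimodule projective dimensions over $B^e$ into perfect complexes over $A^e$ whose shift numerically contributes to $l$.
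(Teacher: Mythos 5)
Your skeleton for the first half matches the paper's: the Tor-vanishing statements follow by induction from hypothesis (3) (Lemma \ref{lem-ring-extension-tor}), hypothesis (2) gives $\pd(A_B),\pd({}_BA)<\infty$, the adjoint pair on singularity categories comes from \cite{OPS19}, and the equivalence reduces to showing that the cones of $B\to \Res^A_B(A\otimes_B^{\mathbb{L}}B)$ and $A\otimes_B^{\mathbb{L}}A\to A$ lie in $\Kb{\pmodcat{B^e}}$ and $\Kb{\pmodcat{A^e}}$. The paper's version of your complex $K^{\bullet}$ is the exact sequence $0\to A\otimes_B M^{\otimes_B(p-1)}\otimes_B A\to\cdots\to A\otimes_B M\otimes_B A\to A\otimes_B A\to A\to 0$ of \cite[Proposition 2.3]{CLMS21b} (with $M=A/B$), whose nontrivial terms are in $\Kb{\pmodcat{A^e}}$ by (1)--(3); your eventually-homological-isomorphism sketch also agrees with the paper's. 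The genuine gap is precisely the step you flag as the main obstacle: producing Wang's data, i.e.\ bimodules projective on \emph{both} sides realising $\Omega^l_{A^e}(A)$ and $\Omega^l_{B^e}(B)$. Since $A$ is only of finite projective dimension on each side over $B$, the tensor functors are not yet given by one-sided projective bimodules, and converting the adjoint equivalence with perfect unit/counit cones into a singular equivalence of Morita type with level is exactly the content of \cite[Theorem 3.6]{Dal21} and \cite[Proposition 3.3]{Qin22}, which the paper cites rather than reproves. Your proposal neither carries out this bookkeeping nor invokes these results, so as written the ``with level'' part is not established.

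Two smaller corrections on the Gorenstein half. Under condition (1), the criterion is applied to $\Res^A_B$ itself, rewritten as ${}_BA\otimes_A^{\mathbb{L}}-$ for the bimodule ${}_BA_A$: the hypothesis on $\mathbb{R}\Hom_B(A,B)$ makes this functor --- not ``the right adjoint of restriction'' --- preserve $\fGd{-}$. Under condition (2), $\Res^A_B$ does not in general send Gorenstein projectives to Gorenstein projectives; what the paper proves, and what Criterion \ref{Crit: restr-Gproj3} actually requires, is only that some fixed syzygy $\Omega^t(\Res^A_B(U))$ lies in ${}^{\perp}B$ for every $U\in{}^{\perp}A$, a fact deduced from the direct-summand hypothesis together with the eventually homological isomorphism of Theorem \ref{main-thm-semt2}; the induced equivalences on $\stgp{-}$ and $\Gdef(-)$ are then read off from the commutative ladder over the singularity-category equivalence.
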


In course of proof of the above result, we obtain some handy criteria for a functor between module categories to induce  triangle functors between the stable categories of Gorenstein projective  modules, which is our second main result.

\begin{Crit}[{Criteria~\ref{Crit: restr-Gproj2} and \ref{Crit: restr-Gproj3}}]\label{main criteria}
Let $X$ be an $\Gamma$-$\Lambda$-bimodule over two   finite dimensional $\bfk$-algebras $\Lambda$ and $\Gamma$ such that $X$ has finite projective dimension both as left $\Gamma$-module and as right $\Lambda$-module.
Assume that   $\mathbb{R}\Hom_\Gamma(X,\Gamma)$ is quasi-isomorphic to a bounded complex with each term in ${\rm Gproj}(\Lambda)^{\bot}$,  or that there is an integer $t$ such that $\Omega ^t(X\otimes_\Lambda M)\in {}^{\perp} {\Gamma}$ for any $M\in {}^{\perp}\Lambda$. Then
$X\otimes^{\mathbb{L}}_\Lambda-$ induces a triangle functor from $\fGd{\Lambda}$ to $\fGd{{\Gamma}}$, and hence a triangle functor from $\stgp{\Lambda} \simeq \fGd{\Lambda}/\Kb{\pmodcat{\Lambda}}$ to $\stgp{\Gamma} \simeq \fGd{\Gamma}/\Kb{\pmodcat{\Gamma}}$.
\end{Crit}

This result is of independent interest as it generalises some known criteria appeared in the literatures; see \cite{HP17} and \cite[Theorem I (ii)]{OPS19}.

We first apply Theorem \ref{main-thm} to several homological conjectures, which are still open now. These conjectures play important roles in representation theory of finite dimensional algebras and homological algebra.
Our third main result concerns reductions of these homological conjectures via bounded extensions.
Note that for  Han's conjecture  the following result  was firstly proved by  Cibils, Lanzilotta, Marcos, and Solotar in \cite{CLMS22} for left or right bounded extensions.
\begin{Thm}[Theorem~\ref{main-cor-semt-1} and \ref{main-cor-semt-2}]
\label{main-cor}
Let $B\subset A$ be a bounded extension. Then the following statements hold:
\begin{itemize}
\item[$(1)$] $B$ satisfies the finitistic dimension conjecture if and only if so does $A$;

\item[$(2)$] $B$ satisfies Han's conjecture if and only if so does $A$;

\item[$(3)$]  $B$ satisfies Keller's conjecture if and only if so does $A$.
\end{itemize}

\noindent If, in addition, $\mathbb{R}\Hom_{B}(A,B)$ is perfect as left $A$-module
or $_BB$ is a direct summand of $_BA$, then
\begin{itemize}
\item[$(4)$] $B$ satisfies Auslander-Reiten's conjecture if and only if so does $A$;

\item[$(5)$] $B$ satisfies the Fg condition for   Hochschild cohomology if and only if so does $A$. %In this case, they have the same support variety theory.
\end{itemize}
\end{Thm}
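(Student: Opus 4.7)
The plan is to treat Theorem~\ref{main-thm} as a transport mechanism and to apply different layers of it to different conjectures, invoking known invariance results for each.

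For (1) and (3), the strategy is immediate. Theorem~\ref{main-thm} produces a singular equivalence of Morita type with level between $A$ and $B$. The finitistic dimension conjecture is invariant under this class of equivalences by Wang \cite{Wang15}, which gives (1). Keller's conjecture on singular Hochschild cohomology is invariant under the same class of equivalences by \cite{CLW20}, which gives (3). Both items therefore reduce directly to existing invariance theorems applied to the equivalence provided by Theorem~\ref{main-thm}.

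For (2), Han's conjecture asserts that $\HH_n(A) = 0$ for all $n \gg 0$ implies $A$ has finite global dimension. I would proceed in two steps. First, the eventually homological isomorphism $\Res^A_B$ from Theorem~\ref{main-thm} implies, via the results of \cite{PSS14}, that $A$ has finite global dimension if and only if $B$ does. Second, one sets up a Jacobi--Zariski style long exact sequence of Hochschild homology attached to the extension $B \subset A$; the tensor nilpotence, the finite bimodule projective dimension of $A/B$, and the $\Tor$-vanishing condition in Definition~\ref{def-bound-exten} should force the relative contribution to vanish in high degrees, yielding $\HH_n(A) \cong \HH_n(B)$ for $n \gg 0$. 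Combining the two steps gives the equivalence of Han's conjecture for $A$ and $B$.

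For (4) and (5) we switch on the additional hypothesis, which by Theorem~\ref{main-thm} provides triangle equivalences $\stgp{B} \simeq \stgp{A}$ and $\Gdef(B) \simeq \Gdef(A)$ on top of the singular equivalence of Morita type with level. The Auslander--Reiten conjecture can be phrased through the behaviour of self-orthogonal Gorenstein projective modules; the equivalence $\stgp{B} \simeq \stgp{A}$, combined with the compatibility between ${}_AA \otimes_B -$ and $\Res^A_B$ in the relevant $\Ext$ calculations, transfers this formulation. For the Fg condition, we use that an algebra $C$ is Gorenstein if and only if $\Gdef(C) = 0$, so under $\Gdef(A) \simeq \Gdef(B)$ we conclude $A$ is Gorenstein if and only if $B$ is; the finite generation of $\Ext^*_A(A/\rad A, A/\rad A)$ over $\HH^*(A)$ is then transferred through the singular equivalence of Morita type with level by tracking the action of Hochschild classes in high degree. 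The main obstacle I anticipate is item (2), namely constructing the Jacobi--Zariski long exact sequence at the generality of Definition~\ref{def-bound-exten} and verifying the vanishing of its relative terms using only the Tor and tensor-nilpotence hypotheses; item (5) is the next most delicate, since the Fg condition mixes Hochschild cohomology with $\Ext$-algebra data and the transfer requires careful tracking of multiplicative structures across the equivalence.
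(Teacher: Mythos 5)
Your treatment of (1) and (3) coincides with the paper's, and your route for (5) is essentially the one taken there (Fg forces Gorensteinness by \cite{EHSST04}, Gorensteinness transfers because $\Gdef(A)\simeq\Gdef(B)$, and then Fg passes along the singular equivalence of Morita type with level by \cite[Theorem 7.4]{Ska16}). The problem is your item (2). Your first step (finiteness of global dimension transfers) is fine, but for the comparison of Hochschild homology you propose to construct a Jacobi--Zariski long exact sequence for $B\subset A$ and to kill its relative terms in high degrees. That is the method of \cite{CLMS22}, and it is exactly what becomes problematic once $A/B$ is not assumed projective on one side: the sequence of \cite{CLMS21b} is only ``nearly exact,'' and controlling the relative homology using only tensor nilpotence, finite bimodule projective dimension and the $\Tor$-vanishing of Definition~\ref{def-bound-exten} is the hard point you flag as an obstacle but do not resolve. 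The paper deliberately bypasses this: by Theorem~\ref{main-thm-semt} the algebras are singularly equivalent of Morita type with level, and such an equivalence already preserves Hochschild homology (in the relevant range) by Wang \cite{Wang15}; so (2) follows by citation, with no Jacobi--Zariski input at all. As written, your proof of (2) has a genuine unfilled gap where a one-line appeal to \cite{Wang15} suffices.

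The gap in (4) is more structural. A counterexample to the Auslander--Reiten conjecture is a nonprojective $X$ with $\Ext^i_A(X,X\oplus A)=0$ for all $i\ge 1$; such an $X$ lies in ${}^{\perp}A$ (it is semi-Gorenstein-projective in the sense of \cite{RZ20}) but need \emph{not} be Gorenstein projective, so the equivalence $\stgp{B}\simeq\stgp{A}$ does not detect it and cannot by itself transfer the conjecture. The paper instead works with the full embedding of $\underline{{}^{\perp}A}$ into $\Sg(A)$ and the transfer criteria of \cite{CHQW23}: one direction uses that $\Res^A_B$ preserves $\Kb{\Proj}$ together with \cite[Corollary 3.5]{CHQW23}; the converse needs either that $\mathbb{R}\Hom_B(A,B)$ is perfect over $A$ (so the adjoint tuple extends one step further and \cite[Corollary 3.7]{CHQW23} applies), or, in the split case, the explicit verification that $\Omega^t\circ\Res^A_B$ carries ${}^{\perp}A$ into ${}^{\perp}B$ --- which itself rests on the eventually homological isomorphism of Theorem~\ref{main-thm-semt2} and on $_BB$ being a direct summand of $_BA$. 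None of this is recoverable from ``compatibility in the relevant $\Ext$ calculations'' applied to Gorenstein projectives: the orthogonality class you must control is ${}^{\perp}(-)$, not $\Gproj(-)$, and supplying that control is where the additional hypotheses of the theorem actually enter.
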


%Note that  bounded extensions in our definition is less restrictive than the left or right bounded extensions introduced in \cite{CLMS22} (but the present definition was also suggested by them \cite{CLMS22erratum}, see Remark~\ref{rem:equivalence-tor} (2)). Consequently, Theorem \ref{main-cor} (2) extends the main result  of  \cite{CLMS22}.

Moreover, we apply Theorem \ref{main-thm} to trivial extensions and triangular matrix algebras, and  get several reduction methods on
singularity categories and Gorenstein defect categories;
see Propositions~\ref{prop-trivial-extension} and  \ref{prop-tri-matrix-alg}, and    Examples~\ref{exam-1} and  \ref{exam-2}.
 In particular, since the arrow removal
operation yields a bounded split extension, we reobtain \cite[Theorem A (ii)]{GPS21} and the main result of \cite{EPS22}
in Example~\ref{exam-1}.

The layout of this paper is as follows: Section~\ref{Sect: notion} introduces the notion of   bounded extensions and provides some examples.  Section~\ref{semt} studies singular  equivalences of Morita type with level induced by bounded extensions and we prove the first part of Theorem~\ref{main-thm} (see Theorems~\ref{main-thm-semt} and \ref{main-thm-semt2}).  Section~\ref{Sect: Gorenstein} contains some criteria such that    a functor between module categories   induces  triangle functors between  stable categories of Gorenstein projective  modules and Gorenstein defect categories and their applications  to bounded extensions,  and we prove Criterion~\ref{main criteria} (see {Criteria~\ref{Crit: restr-Gproj2} and \ref{Crit: restr-Gproj3}) and the second part of  Theorem~\ref{main-thm} (see Theorems~\ref{main-thm-def1} and \ref{main-thm-def2}).
In Section~\ref{Sect:Conjecture}, we apply the results obtained in Sections~\ref{semt} and \ref{Sect: Gorenstein} to some homological conjectures. The last Section~\ref{Sect:Examples} contains many examples including  trivial extensions and triangular matrix algebras, which permits recovering and generalising many results in the literature including the results around the arrow removal operation \cite{EPS22, GPS21}.

\section{Bounded extensions}\label{Sect: notion}

In this section, we  introduce  the notion of   bounded extensions and provide some examples.

Let $B$ be a finite dimensional  algebra over a fixed field $\bfk$ and $M$ be a $B$-$B$-bimodule. Define $M^{\otimes_B0}=B$ and $M^{\otimes_B(i+1)}=M\otimes_B M^{\otimes_Bi }$ for $i\ge 0$. We say that $M$ is \emph{$B$-tensor nilpotent} if  $M^{\otimes_B p}=0$ for some $p\ge 1$.

\begin{Def}[{\cite{CLMS22erratum}}]\label{def-bound-exten} An extension $B\subset A$ of finite dimensional algebras is called bounded if
\begin{itemize}
\item[$(1)$] $A/B$ is $B$-tensor nilpotent;

\item[$(2)$] $A/B$ has finite projective dimension as a $B$-$B$-bimodule;

\item[$(3)$] $\Tor_i^B(A/B, (A/B)^{\otimes_B j})=0$ for all $i, j\geq 1$.

\end{itemize}
\end{Def}

\begin{Remark}\label{rem:equivalence-tor}
It follows from \cite [Proposition 2.4]{Lof76} that
$\Tor_i^B(A/B, (A/B)^{\otimes_B j})=0$ for all $i, j\geq 1$ if and only if $\Tor_i^B( (A/B)^{\otimes_B j}, A/B)=0$ for all $i, j\geq 1$.

%\item[$(2)$]  This definition of bounded extensions is not essentially new and  has in fact been suggested by  Cibils, Lanzilotta, Marcos, and Solotar in \cite{CLMS22erratum}.
%\end{itemize}
\end{Remark}
%
%\gd{Question: Give examples of extensions which satisfy our definition, but not the definition of Cibils et al.}
%

%\medskip
Now, we give some examples of extensions which satisfy our definition, but not the definition in \cite{CLMS22}, where it is  required that $A/B$ is projective as a left or right $B$-module.
More examples can be found in Section~\ref{Sect:Examples}.

Let $B$ be a finite dimensional  algebra and $M$ be  a finite dimensional $B$-$B$-bimodule.
 The trivial extension $A=B\ltimes M$ is by definition $B\oplus M$ and the multiplication is given
  by  $$(b, m)(b', m')=(bb', mb'+bm'), b, b'\in B, m, m'\in M.$$

The following result is trivial.
\begin{Prop}\label{Prop-bounded-exten}
 The trivial extension $B\subset A=B\ltimes M$ is a bounded extension if and only if $\pd( _{B}M_B)<\infty$, $M^{\otimes _B p}=0$ for some integer $p$ and
$\Tor _i^B(M, M^{\otimes _B j})=0$ for each $i,j\geq 1$.
%Hence, Theorems~\ref{main-thm-semt}, \ref{main-thm-semt2},    \ref{main-thm-def2}, \ref{main-cor-semt-1} and \ref{main-cor-semt-2}  apply.

%If this is the case,  there are triangle equivalences
%$$ \Sg(B) \simeq \Sg(A), \  \stgp{B}\simeq \stgp{A},\   \mbox{and} \   \Gdef(B)\simeq \Gdef(A).$$
%Moreover, $A$ and $B$ are singularly equivalent of Morita type with level.
\end{Prop}

We could produce many examples following the spirit of the above result.
\begin{Exam}\label{Ex-bounded-extensions}
Let $B$ be a finite dimensional  algebra. Let ${}_BU$ and $V_B$ be two modules of finite projective dimension, but not projective. Put  $M=U\ot_\bfk V$.
Then the trivial extension  $B\subset A=B\ltimes M$ is a bounded extension if and only if $\Tor^B_i(V, U)=0, \forall i\geq 0$.
\end{Exam}

In order to give more examples, let us recall basic notations about quivers and their representations.

Let $Q=(Q_0,Q_1)$ be a quiver with $Q_0$ the set of vertices and $Q_1$ the set of arrows between vertices. The source and target of an arrow $\alpha\in Q_1$ are denoted by $s(\alpha)$ and $t(\alpha)$, respectively. Note that the composition of two arrows $\alpha,\beta\in Q_1$ is written as $\beta\alpha$, where $\alpha$ comes first and then $\beta$ follows. Suppose that $Q$ is a finite acyclic quiver, that is, a quiver with finitely many vertices and finitely many arrows without loops or oriented cycles. Then the path algebra $\bfk Q$ of $Q$ is a finite dimensional $\bfk$-algebra of global dimension $1$. As usual, let $e_i$ be the primitive idempotent element in $\bfk Q$ corresponding to the vertex $i\in Q_0$, and let $S(i)$ and $T(i)$ stand for the simple left $B$-module and the simple right $B$-module corresponding to the vertex $i\in Q_0$, respectively.

The following is a  concrete example.
\begin{Exam}
Let $B$ be the following quiver algebra over $\bfk$
$$1\lraf{\alpha}2\lraf{\beta}3\lraf{\gamma}4.$$
%$$\xymatrix{
%1\ar[r]^{\alpha}&2\ar[r]^{\beta}&3\ar[r]^{\gamma}&4.
%}$$
%Then its opposite algebra $B^{\rm op}$ is the following quiver algebra
%$$\xymatrix{
%1^{\circ}
%&2^{\circ}\ar[l]_{a^{\circ}}
%&3^{\circ}\ar[l]_{b^{\circ}}
%&4^{\circ}\ar[l]_{c^{\circ}}
%}$$
%Let $S(i)$ and $P(i)$ be the simple and projective $B$-module corresponding to the vertex $i$ for $1\le i\le 4$, and let $T(i)$ and $Q(i)$ be the simple and projective $B^{\rm op}$-module corresponding to the vertex $i$ for $1\le i\le 4$.
Define $M:=S(3)\otimes_{\bfk} T(2)$ and $A:=B\ltimes M$. It is easy to check that
 $\Tor^B_i(T(2), S(3))=0$ for any $i\geq 0$, and
then it follows from
Example~\ref{Ex-bounded-extensions} that
$B\subset A$ is a bounded extension and $_BM\not\in \pmodcat{B}$ and $M_B\not\in \pmodcat{B^{\rm op}}$.
%Hence, the extension $B\subset A$ is a bounded extension in the sense of Definition~\ref{def-bound-exten}, but not  of \cite{CLMS22}.
\end{Exam}

Let's end this section by pointing out that  relation extensions, which are closely related to cluster-tilted algebras
\cite{ABS08, AGST16},
are not   bounded extensions in general. More precisely, let $B$ be an algebra of global dimension at most two and denote $DB=\Hom_\bfk(B, \bfk)$ with the canonical $B$-$B$-bimodule structure.
Let  $M=\Ext _B^2(DB,B)$ which has the  induced    $B$-$B$-bimodule structure.
The trivial extension $A=B\ltimes M$  is called the relation extension of $B$, see \cite{ABS08}. In particular, when the quiver of $B$ has no oriented cycles, $B$ is a tilted algebra if and only if  its \emph{relation extension} is cluster-tilted \cite{ABS08}.  Note that even if $B$ has global dimension at most two, $A$ has non vanishing singularity category in general  \cite{CGL15, Lu16}.

The following example, studied in \cite[Example 2.7]{ABS08}, \cite[Example 5.3]{AGST16} and \cite[Example 5.5]{CLMS22}, shows that the relation extension is not bounded in general.
\begin{Exam}
Let $B = \bfk Q/I$ where $Q$ is the quiver
$$\xymatrix{& 2 \ar[dr]^b &  \\
1\ar[ur]^a \ar[rr]_c& & 3}$$
and $I=\langle ba \rangle$. Its relation extension $A$ is given by the quiver
$$\xymatrix{& 2 \ar[dr]^b &  \\
1\ar[ur]^a \ar@<-1ex>[rr]_c& & 3\ar@<-1ex>[ll]_d}$$ with relations
$\{ba, ad, db, dcd\}$. As pointed out in \cite{CLMS22} (the paragraph before \cite[Lemma 5.14]{CLMS22}),
$A/B$ is not $B$-tensor nilpotent and then the relation extension $B\subset A$ is not bounded.
\end{Exam}

\section{Singular equivalences  of Morita type with level  induced by bounded extensions}\label{semt}

In this section, we are concerned about singular equivalences induced by bounded extensions.

%Let $A$ be a finite dimensional algebra over a field $\bfk$. Let $\Modcat{A}$ be the category of left $A$-modules, and $\modcat{A}$ be the category of finitely generated left $A$-modules. we denoted by $\D{A}$ the derived category of complexes over $A$, and by $\Db{\modcat{A}}$ the derived category of bounded complexes over $\modcat{A}$.
%Recall that a complex is called \emph{perfect} in $\Db{\modcat{A}}$ if it is isomorphic to a bounded complex of finite generated projective modules. Let $\per(A)$ be the full subcategory of $\Db{\modcat{A}}$ consisting of all perfect complexes, and $\Kb{\pmodcat{A}}$ be the full subcategory of $\Db{\modcat{A}}$ consisting of all bounded complexes of finite generated projective modules. For convenience, we do not distinguish between $\per(A)$ and $\Kb{\pmodcat{A}}$.

For our proof of Theorem \ref{main-thm}, we need the following three lemmas  whose easy proofs are left to the reader.
%The first lemma is well-known. Here, we give a proof for readers' convenience.
\begin{Lem}\label{lem-tensor}
Let $A$, $B$ and $C$ be finite dimensional algebras, $M$ be an $A$-$B$-bimodule and $N$ be a $B$-$C$-bimodule.
If $\Tor _i^B(M,N)=0$ for any $i>0$, then $M\otimes _B^{\mathbb{L}}N\cong M\otimes _BN$ in $\Db{\modcat{A\otimes_{\bfk}C^{\rm op}}}$.
\end{Lem}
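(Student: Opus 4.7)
The plan is to compute $M \otimes_B^{\mathbb{L}} N$ by means of a projective resolution of $M$ over the enveloping algebra $A \otimes_{\bfk} B^{\op}$, and then to observe that the $\Tor$-vanishing hypothesis collapses the resulting complex to its degree-zero cohomology.

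First, I would choose a projective resolution $P^\bullet \to M$ in $\Modcat{A \otimes_{\bfk} B^{\op}}$; since the algebras and the bimodule $M$ are finite-dimensional over $\bfk$, one may take the terms finitely generated. Because $\bfk$ is a field, every projective $A \otimes_{\bfk} B^{\op}$-module is a direct summand of a free module and is therefore projective (in particular flat) as a right $B$-module. Hence $P^\bullet$ is simultaneously a resolution of $M$ by $A$-$B$-bimodules and by right-$B$-flat modules, so it computes the derived tensor product and yields a canonical identification
$$M \otimes_B^{\mathbb{L}} N \;\cong\; P^\bullet \otimes_B N$$
in $\D{A \otimes_{\bfk} C^{\op}}$.

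Next, I would compute the cohomology of $P^\bullet \otimes_B N$. Viewed as a complex of $A \otimes_{\bfk} C^{\op}$-modules, its cohomology in cohomological degree $-i$ is $\Tor_i^B(M,N)$ equipped with the natural $A$-$C$-bimodule structure inherited from those on $M$ and $N$. By hypothesis these vanish for all $i \geq 1$, while the degree-zero cohomology is simply $M \otimes_B N$. Consequently $P^\bullet \otimes_B N$ is quasi-isomorphic, as an $A \otimes_{\bfk} C^{\op}$-complex, to the module $M \otimes_B N$ placed in degree zero. Combining this with the displayed isomorphism produces $M \otimes_B^{\mathbb{L}} N \cong M \otimes_B N$ in $\Db{\modcat{A \otimes_{\bfk} C^{\op}}}$, which is the desired statement.

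There is essentially no obstacle in this argument; it is a routine consequence of the definition of $\Tor$ via projective resolutions. The one subtlety worth emphasising is that the resolution must be chosen over the enveloping algebra $A \otimes_{\bfk} B^{\op}$ rather than merely over $B^{\op}$, so that the resulting quasi-isomorphism respects the full $A$-$C$-bimodule structure and lives in the correct derived category.
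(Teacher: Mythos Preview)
Your proof is correct and is exactly the standard argument; the paper itself does not give a proof of this lemma but simply states that its ``easy proof is left to the reader,'' so there is nothing to compare against. Your care in taking the resolution over $A\otimes_{\bfk}B^{\op}$ (so that the quasi-isomorphism lives in the bimodule derived category) is precisely the point the authors would expect the reader to observe.
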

%\begin{proof}
%Let $$\cdots \longrightarrow P^{-2} \longrightarrow
%P^{-1} \longrightarrow P^{0} \longrightarrow
%M \longrightarrow 0 $$
%be the projective resolution of $M$ as an $A$-$B$ bimodule
%and let $P^\bullet$ be the deleted complex $$ \cdots \longrightarrow P^{-2} \longrightarrow
%P^{-1} \longrightarrow P^{0} \longrightarrow 0.$$ Then
%$M\otimes_B^{\mathbb{L}}N\cong P^\bullet\otimes _BN$ in $\Db{A\otimes_{\bfk} C^{op}}$. Since $\Tor_i^B(M,N)=0$ for any $i>0$, the complex
%of $A$-$C$-bimodules
%$$\cdots \longrightarrow P^{-2}\otimes _BN \longrightarrow
%P^{-1}\otimes _BN \longrightarrow P^{0}\otimes _BN \longrightarrow
%M\otimes _BN \longrightarrow 0$$
%is exact, and thus $M\otimes _BN\cong P^\bullet\otimes _BN$ in $\Db{A\otimes_{\bfk} C^{op}}$.
%Therefore, $M\otimes _B^{\mathbb{L}}N\cong M\otimes _BN$ in $\Db{A\otimes_{\bfk} C^{op}}$.
%\end{proof}

%The following lemma is essentially due to \cite[Lemma 2.5]{AKLY17}.
\begin{Lem}\label{lem-preserve-kbproj}
Let $B\subset A$ be an extension of finite dimensional algebras and denote $M=A/B$.
Then the following statements hold:
\begin{itemize}
\item[$(1)$] The functor $A\otimes _B^{\mathbb{L}}-\otimes _B^{\mathbb{L}}A :\Db{\modcat{B^e}}\rightarrow \Db{\modcat{A^e}}$
sends $\Kb{\pmodcat{B^e}}$ to $\mathrm{K}^b(\pmodcat{A^e})$.

\item[$(2)$]  If $\pd (_BM_B)<\infty$, then the functor $_BM\otimes _B^{\mathbb{L}}-:\Db{\modcat{B^e}}\rightarrow \Db{\modcat{B^e}}$
sends $\Kb{\pmodcat{B^e}}$ to $\Kb{\pmodcat{B^e}}$.
\end{itemize}
\end{Lem}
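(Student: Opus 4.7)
Both parts are handled by the same template: verify the claim on the rank-one free $B^e$-module, extend by direct summands and finite sums to all of $\pmodcat{B^e}$, and then promote termwise to bounded complexes. The standing preliminary fact is that any projective $B^e$-module is projective (hence flat) both as a left $B$-module and as a right $B$-module, since $B\otimes_\bfk B^{op}$ is free over $B$ on each side. This lets us freely replace derived tensor products over $B$ by ordinary tensor products whenever one of the arguments sits in $\Kb{\pmodcat{B^e}}$.

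For (1), the componentwise left and right $B$-flatness of $X^\bullet \in \Kb{\pmodcat{B^e}}$ yields
$$A\otimes^{\mathbb{L}}_B X^\bullet \otimes^{\mathbb{L}}_B A \;\simeq\; A\otimes_B X^\bullet \otimes_B A.$$
A direct computation on the rank-one free bimodule gives
$$A\otimes_B (B\otimes_\bfk B^{op})\otimes_B A \;\cong\; A\otimes_\bfk A^{op} \;=\; A^e,$$
so the functor carries $\pmodcat{B^e}$ into $\pmodcat{A^e}$. Applying this termwise to $X^\bullet$ produces an object in $\Kb{\pmodcat{A^e}}$.

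For (2), the hypothesis $\pd(_BM_B)<\infty$ allows us to pick a bounded projective resolution $Q^\bullet \to M$ over $B^e$. Each $Q^i$ is, in particular, right-$B$-flat, so for any $X^\bullet \in \Kb{\pmodcat{B^e}}$ the derived tensor $M\otimes^{\mathbb{L}}_B X^\bullet$ is computed by the total complex $Q^\bullet \otimes_B X^\bullet$. It then suffices to check that each $Q^i\otimes_B X^j$ belongs to $\pmodcat{B^e}$; by summands and finite sums this reduces to the identity
$$(B\otimes_\bfk B^{op}) \otimes_B (B\otimes_\bfk B^{op}) \;\cong\; B\otimes_\bfk B \otimes_\bfk B^{op},$$
which is a finitely generated free $B^e$-module of rank $\dim_\bfk B$. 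Boundedness of both $Q^\bullet$ and $X^\bullet$ ensures the total complex is bounded, yielding an object of $\Kb{\pmodcat{B^e}}$.

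The only mildly delicate step in either argument is the justification of the identification of derived with underived tensor products; this is settled once one notes that, for both sides of the tensor, the relevant complexes are bounded and have components that are flat over $B$ in the appropriate variance. Everything else is a bookkeeping check on the bimodule structures.
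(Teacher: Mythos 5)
Your proof is correct; the paper explicitly leaves this lemma's proof to the reader, and your argument (reduce to the rank-one free $B^e$-module via additivity, note that projective $B^e$-modules are $B$-projective on both sides so derived tensors become underived, and compute $A\otimes_B B^e\otimes_B A\cong A^e$ and $B^e\otimes_B B^e\cong (B^e)^{\dim_\bfk B}$) is exactly the standard argument the authors had in mind. No gaps.
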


%\begin{proof}
%(1) This follows from the isomorphisms $$A\otimes _B^{\mathbb{L}}(B\otimes _{\bfk}B)\otimes_B^{\mathbb{L}}A
%\cong (A\otimes_B^{\mathbb{L}}B)\otimes_{\bfk}(B\otimes_B^{\mathbb{L}}A)\cong A\otimes _{\bfk}A$$ and
%\cite[Lemma 2.5]{AKLY17}.
%
%(2) Let $P^\bullet \rightarrow A/B$ be the bounded projective resolution of $A/B$ as a $B$-$B$-bimodule.
%Then $_B(A/B)\otimes_B^{\mathbb{L}}(B\otimes_{\bfk}B)\cong P^\bullet \otimes _B(B\otimes _{\bfk}B)$ in $\Db{B^e}$.
%Since $(B\otimes_{\bfk}B)\otimes_B(B\otimes_{\bfk}B)\cong B\otimes_{\bfk}B\otimes_{\bfk}B\cong (B\otimes_{\bfk}B)^{\dim_{\bfk}B}$,
%we have that $P^i \otimes _B(B\otimes _{\bfk}B)\in \pmodcat{B^e}$. Therefore, $P^\bullet \otimes _B(B\otimes _{\bfk}B)
%\in \Kb{\pmodcat{B^e}}$ and then the statement follows from \cite[Lemma 2.5]{AKLY17}.
%\end{proof}

\begin{Lem}[{compare with \cite[Section 4]{CL20}}]\label{lem-ring-extension-tor}
Let $B\subset A$ be an extension of finite dimensional algebras.
If $\Tor _i^B(A/B, (A/B)^{\otimes _Bj})=0$ for each $i,j\geq 1$, then the following statements hold:
\begin{itemize}
\item[$(1)$] $\Tor _i^B(A, A)=0$ for each $i\geq 1$;

\item[$(2)$] $\Tor _i^B((A/B)^{\otimes _Bj}, A)=0$ for each $i,j\geq 1$;

\item[$(3)$] $\Tor _i^B(A, (A/B)^{\otimes _Bj}\otimes _BA)=0$ for each $i,j\geq 1$.

\end{itemize}
\end{Lem}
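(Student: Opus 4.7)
The plan is to peel off each of (1)--(3) by feeding the basic short exact sequence of $B$-bimodules
$$0\to B\to A\to M\to 0,\qquad M:=A/B,$$
and its tensor translates by powers of $M$, into long exact sequences of $\Tor_*^B$. Two ingredients are used throughout: flatness of $B$ over itself, which kills $\Tor_i^B(-,B)$ and $\Tor_i^B(B,-)$ in positive degrees, and the standing hypothesis $\Tor_i^B(M,M^{\otimes_B j})=0$, employed in its original form and, via Remark~\ref{rem:equivalence-tor}(1), in its flipped form $\Tor_i^B(M^{\otimes_B j},M)=0$.

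First I would prove (2). Applying $\Tor_*^B(M^{\otimes_B j},-)$ to the SES above and using $\Tor_i^B(M^{\otimes_B j},B)=0$ for $i\geq 1$, the long exact sequence gives isomorphisms $\Tor_i^B(M^{\otimes_B j},A)\cong\Tor_i^B(M^{\otimes_B j},M)$ for $i\geq 2$ together with an injection $\Tor_1^B(M^{\otimes_B j},A)\hookrightarrow\Tor_1^B(M^{\otimes_B j},M)$; both targets vanish by the flipped hypothesis, proving (2). Statement (1) is then immediate: applying $\Tor_*^B(-,A)$ to the same SES and invoking (2) with $j=1$ collapses $\Tor_i^B(A,A)$ into $\Tor_i^B(M,A)=0$ for every $i\geq 1$.

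For (3), I would first record the left-sided twin of (2), namely $\Tor_i^B(A,M^{\otimes_B k})=0$ for all $i,k\geq 1$, obtained by the identical long-exact-sequence argument with the original (unflipped) form of the hypothesis now in the role played earlier by its flipped form. Next, since $\Tor_1^B(M^{\otimes_B j},M)=0$, tensoring the defining SES on the left by $M^{\otimes_B j}$ remains short exact, producing
$$0\to M^{\otimes_B j}\to M^{\otimes_B j}\otimes_B A\to M^{\otimes_B(j+1)}\to 0.$$
Applying $\Tor_*^B(A,-)$ to this new sequence sandwiches $\Tor_i^B(A,M^{\otimes_B j}\otimes_B A)$ between the groups $\Tor_i^B(A,M^{\otimes_B j})$ and $\Tor_i^B(A,M^{\otimes_B(j+1)})$, both of which vanish in positive degrees by the preliminary step, forcing (3).

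The only real obstacle is logistical bookkeeping: one has to verify that the left-tensored sequence used in (3) is genuinely short exact (this is precisely where the $\Tor_1^B(M^{\otimes_B j},M)=0$ half of the hypothesis is consumed), and to invoke Remark~\ref{rem:equivalence-tor}(1) systematically whenever a side swap of $\Tor$ is required. No input beyond the defining SES, the hypothesis, and flatness of $B$ over itself enters.
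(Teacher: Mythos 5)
Your proof is correct and is exactly the standard d\'evissage argument the paper has in mind (it leaves this lemma's ``easy proof'' to the reader, pointing to \cite[Section 4]{CL20}): all three parts follow from long exact sequences attached to $0\to B\to A\to A/B\to 0$ and its left tensor translates, with the flipped form of the hypothesis supplied by Remark~\ref{rem:equivalence-tor}(1). Each step checks out, including the exactness of $0\to M^{\otimes_B j}\to M^{\otimes_B j}\otimes_B A\to M^{\otimes_B(j+1)}\to 0$, which indeed uses $\Tor_1^B(M^{\otimes_B j},M)=0$.
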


%\begin{proof}
%(1) For any $i\geq 1$, applying $(A/B)\otimes_B-$ to the exact sequence of left $B$ modules
%$$0\rightarrow B\rightarrow A\rightarrow A/B\rightarrow 0$$
%and using the fact that $\Tor _i^B(A/B, A/B)=0$,
%we have that
% $\Tor _i^B(A/B, A)=0$. Similarly, applying $-\otimes _BA$ to the exact sequence of right $B$ modules
%$$0\rightarrow B\rightarrow A\rightarrow A/B\rightarrow 0,$$ we get $\Tor _i^B(A, A)=0$ for each $i\geq 1$.
%
%(2) It follows from \cite[Corollary 4.3]{CL20} that $\Tor _i^B((A/B)^{\otimes _Bj},A/B)=0$ for each $i,j\geq 1$.
%Applying $(A/B)^{\otimes _Bj}\otimes _B-$ to the exact sequence of left $B$ modules
%$$0\rightarrow B\rightarrow A\rightarrow A/B\rightarrow 0,$$ we infer that $\Tor _i^B((A/B)^{\otimes _Bj}, A)=0$ for each $i,j\geq 1$.
%
%(3) For each $i,j\geq 1$, we have that $\Tor _i^B(A/B, (A/B)^{\otimes _Bj}\otimes _BA)=0$
%by (2) and \cite[Lemma 4.2]{CL20}.
%Applying $-\otimes _B(A/B)^{\otimes _Bj}\otimes _BA$ to the exact sequence of right $B$ modules
%$$0\rightarrow B\rightarrow A\rightarrow A/B\rightarrow 0,$$ we infer that $\Tor _i^B(A, (A/B)^{\otimes _Bj}\otimes _BA)=0$ for each $i,j\geq 1$.
%\end{proof}

The following two lemmas will be crucial in the proof of Theorem~\ref{main-thm-semt}.
\begin{Lem}\label{Lem: compactness}
 Let $B\subset A$ be an extension of finite dimensional algebras and denote $M=A/B$.  Assume that $\pd (_BM_B)<\infty$ and  $\Tor_i^B(M, M^{\otimes_B j})=0$ for all $i, j\geq 1$.
Then  $A\otimes _BM^{\otimes _Bj}\otimes _BA \in
\Kb{\pmodcat{A^e}}$ for any $j\geq 1$.
\end{Lem}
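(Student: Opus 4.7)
The plan is to first promote $M^{\otimes_B j}$ to an object of $\Kb{\pmodcat{B^e}}$ by induction on $j$, and then apply Lemma~\ref{lem-preserve-kbproj}(1) to tensor from $B^e$ up to $A^e$, being careful to identify the derived tensor product with the ordinary one via Lemma~\ref{lem-tensor} and Lemma~\ref{lem-ring-extension-tor}.

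First I would prove by induction on $j\geq 1$ that $M^{\otimes_B j}\in \Kb{\pmodcat{B^e}}$. The base case $j=1$ is exactly the hypothesis $\pd({}_B M_B)<\infty$. For the inductive step, note that the hypothesis $\Tor_i^B(M, M^{\otimes_B(j-1)})=0$ for $i\geq 1$ together with Lemma~\ref{lem-tensor} (taking the ambient algebras to be $B$ on both sides) gives a bimodule isomorphism
\[
M\otimes_B^{\mathbb L} M^{\otimes_B(j-1)}\;\cong\; M\otimes_B M^{\otimes_B(j-1)} \;=\; M^{\otimes_B j}
\]
in $\Db{\modcat{B^e}}$. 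Since by induction $M^{\otimes_B(j-1)}\in \Kb{\pmodcat{B^e}}$, Lemma~\ref{lem-preserve-kbproj}(2) applied to the endofunctor ${}_B M\otimes_B^{\mathbb L}-$ yields $M^{\otimes_B j}\in \Kb{\pmodcat{B^e}}$.

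Next I would apply Lemma~\ref{lem-preserve-kbproj}(1) to the object $M^{\otimes_B j}\in \Kb{\pmodcat{B^e}}$ just obtained, concluding that
\[
A\otimes_B^{\mathbb L} M^{\otimes_B j}\otimes_B^{\mathbb L} A \;\in\; \Kb{\pmodcat{A^e}}.
\]

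Finally I would identify this derived expression with the honest tensor product $A\otimes_B M^{\otimes_B j}\otimes_B A$. By Lemma~\ref{lem-ring-extension-tor}(2), $\Tor_i^B(M^{\otimes_B j}, A)=0$ for all $i\geq 1$, so Lemma~\ref{lem-tensor} gives $M^{\otimes_B j}\otimes_B^{\mathbb L} A\cong M^{\otimes_B j}\otimes_B A$ as $B$-$A$-bimodules. Then Lemma~\ref{lem-ring-extension-tor}(3) gives $\Tor_i^B(A, M^{\otimes_B j}\otimes_B A)=0$ for all $i\geq 1$, and another application of Lemma~\ref{lem-tensor} yields
\[
A\otimes_B^{\mathbb L} M^{\otimes_B j}\otimes_B^{\mathbb L} A\;\cong\; A\otimes_B M^{\otimes_B j}\otimes_B A
\]
in $\Db{\modcat{A^e}}$, completing the proof.

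I do not expect a serious obstacle here: the only subtle point is that the Tor-vanishing hypotheses must be strong enough to identify derived tensor products with ordinary ones at the level of $B^e$- and $A^e$-bimodules (not merely one-sided modules), which is exactly what Lemma~\ref{lem-tensor} is designed for and what Lemma~\ref{lem-ring-extension-tor} provides as input. The inductive construction itself is essentially forced once one observes that ${}_BM\otimes_B^{\mathbb L}-$ preserves perfect bimodules under the given finite projective dimension hypothesis.
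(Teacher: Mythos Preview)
Your proposal is correct and follows essentially the same route as the paper: first showing $M^{\otimes_B j}\in \Kb{\pmodcat{B^e}}$ via Lemma~\ref{lem-tensor} and Lemma~\ref{lem-preserve-kbproj}(2), then applying Lemma~\ref{lem-preserve-kbproj}(1), and finally identifying the derived with the ordinary tensor via Lemma~\ref{lem-ring-extension-tor}(2)(3). The only cosmetic difference is that the paper writes the first step as a single iterated isomorphism $M^{\otimes_B j}\cong M\otimes_B^{\mathbb L}\cdots\otimes_B^{\mathbb L}M$ rather than phrasing it as an induction on $j$, but this is the same argument.
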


\begin{proof}
  Since $\Tor _i^B(M, M^{\otimes _Bj})=0$ for each $i,j\geq 1$,
it follows from Lemma~\ref{lem-tensor} that $$M\otimes _B
M\otimes _B\cdots \otimes _B
M\cong M\otimes _B^{\mathbb{L}}
M\otimes _B^{\mathbb{L}}\cdots \otimes _B^{\mathbb{L}}
M \in \Db{\modcat{B^e}},$$ which belongs to $\Kb{\pmodcat{B^e}}$ by $\pd (_BM_B)<\infty$ and by Lemma~\ref{lem-preserve-kbproj} (2).
Therefore, $M^{\otimes _Bj}\in \Kb{\pmodcat{B^e}}$ for any $j\geq 1$,
and it follows from Lemma~\ref{lem-preserve-kbproj} (1) that  $$A\otimes _B^{\mathbb{L}}M^{\otimes _Bj}\otimes _B^{\mathbb{L}}A \in
\Kb{\pmodcat{A^e}}.$$  On the other hand, Lemma~\ref{lem-ring-extension-tor} (2), Lemma~\ref{lem-ring-extension-tor} (3)
and Lemma~\ref{lem-tensor} yield that $$A\otimes _B^{\mathbb{L}}M^{\otimes _Bj}\otimes _B^{\mathbb{L}}A
\cong A\otimes _BM^{\otimes _Bj}\otimes _BA.$$ Therefore, $A\otimes _BM^{\otimes _Bj}\otimes _BA \in
\Kb{\pmodcat{A^e}}$ for any $j\geq 1$.
\end{proof}

\begin{Lem}[{\cite[Theorem 3.1 and Proposition 3.7]{OPS19}}]\label{Lem: restriction to singularity}
Let $B\subset A$ be an   extension of finite dimensional algebras.
If $\pd (A_B)<\infty$ and $\pd({}_BA)<\infty$, then there is an adjoint pair
$${_A}A\otimes^{\mathbb{L}}_{B}-: \Sg(B)\rightleftharpoons\Sg(A):  \Res^A_B=_BA\otimes _A-. $$
Assume, moreover, that $\pd({_{B}M_B})<\infty$ with $M=A/B$, then ${_A}A\otimes^{\mathbb{L}}_{B}-: \Sg(B)\to \Sg(A)  $ is fully faithful.
\end{Lem}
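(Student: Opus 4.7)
The plan is to establish the lemma in three layers: verify that both functors descend to the singularity categories, derive the adjoint pair from the classical tensor--Hom adjunction, and finally identify the cone of the unit to upgrade the second assertion. The extra hypothesis on ${}_BM_B$ only enters in the last step.

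For the descent, I note that ${}_AA\otimes^{\mathbb{L}}_B Be = Ae$ is projective over $A$, so ${}_AA\otimes^{\mathbb{L}}_B-$ carries $\Kb{\pmodcat{B}}$ into $\Kb{\pmodcat{A}}$; the assumption $\pd(A_B)<\infty$ further ensures that it carries $\Db{\modcat{B}}$ into $\Db{\modcat{A}}$. For $\Res^A_B$, restriction is exact and preserves finite generation because $A$ is finite-dimensional over $B$, and the hypothesis $\pd({}_BA)<\infty$ guarantees that $\Res^A_B$ carries $\Kb{\pmodcat{A}}$ into $\Kb{\pmodcat{B}}$, since $\Res^A_B(P)$ is a direct summand of a finite sum of copies of ${}_BA$ for every $P\in \pmodcat{A}$. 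The standard derived tensor--Hom adjunction
\[
\mathbb{R}\Hom_A\bigl({}_AA\otimes^{\mathbb{L}}_B X,\,Y\bigr) \;\cong\; \mathbb{R}\Hom_B\bigl(X,\,\mathbb{R}\Hom_A({}_AA,\,Y)\bigr)
\]
together with $\mathbb{R}\Hom_A(A,-)=\Res^A_B$ (as ${}_AA$ is projective and restriction is exact) gives an adjoint pair on the bounded derived categories; because both functors send perfect subcategories to perfect subcategories, this adjunction descends to the Verdier quotients and yields the first assertion.

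For full faithfulness, it suffices to show that the unit $\eta_X\colon X \to \Res^A_B({}_AA\otimes^{\mathbb{L}}_B X) \cong {}_BA\otimes^{\mathbb{L}}_B X$ is an isomorphism in $\Sg(B)$ for every $X\in \modcat{B}$. Tensoring the short exact sequence $0\to B\to A\to M \to 0$ of $B$-$B$-bimodules with $X$ over $B$ produces a distinguished triangle
\[
X \;\longrightarrow\; {}_BA\otimes^{\mathbb{L}}_B X \;\longrightarrow\; M\otimes^{\mathbb{L}}_B X \;\longrightarrow\; X[1]
\]
in $\D{B}$ whose first arrow is identified with $\eta_X$, so I have to show $M\otimes^{\mathbb{L}}_B X\in \Kb{\pmodcat{B}}$. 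Using a finite projective $B^e$-resolution $P^\bullet \to M$ provided by $\pd({}_BM_B)<\infty$, and noting that each $P^i$ is projective as a right $B$-module, we get $M\otimes^{\mathbb{L}}_B X \simeq P^\bullet \otimes_B X$ in $\D{B}$; each term $P^i\otimes_B X$ is a summand of $(B\otimes_\bfk B)^{\oplus n}\otimes_B X \cong B\otimes_\bfk X^{\oplus n}$, which is a free left $B$-module of finite rank since $X$ is finite dimensional. Hence the cone is perfect, the unit is invertible in $\Sg(B)$, and the formalism of adjunctions yields full faithfulness.

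The main delicate point is the passage between the two sides of the bimodule $M$ in the last step: although $\pd({}_BM_B)<\infty$ is only a two-sided statement, one has to extract from a bimodule projective resolution a one-sided projective resolution that also produces left-$B$-projective terms after contracting with a finite-dimensional module. Everything else amounts to familiar manipulations with adjunctions and Verdier localisations; alternatively, one could simply invoke the cited result from the literature.
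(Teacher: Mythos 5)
Your proposal is correct and follows essentially the same route as the paper: the first assertion is exactly the restriction argument sketched in Remark~\ref{Rem: Restrition to singularity} (both functors preserve $\Kb{\mathrm{proj}}$ and $\Db{\mathrm{mod}}$, so the derived tensor--Hom adjunction descends to the Verdier quotients), and your identification of the cone of the unit with $M\otimes^{\mathbb{L}}_BX$ and the proof that it is perfect via a finite $B^e$-projective resolution of $M$ is the same mechanism the paper uses (at the bimodule level) in the proof of Theorem~\ref{main-thm-semt}. The only point worth making explicit is that checking the unit on stalk complexes of modules suffices because they generate $\Db{\modcat{B}}$ as a triangulated category; this is standard and does not affect correctness.
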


\begin{Remark}\label{Rem: Restrition to singularity}
Let us remark on the proof of the first statement of the above result.
Consider the adjoint triple between derived categories
$$\xymatrix@!=8pc{ \D{A} \ar[r]|{\Res^A_B=_BA\otimes _A-} & \D{B} \ar@<-3ex>[l]|{A\otimes _B^{\mathbb{L}}-}
\ar@<+3ex>[l] }.$$
Since $\pd (A_B)<\infty$ and $\pd({}_BA)<\infty$,    this adjoint triple extends one step upwards and one step downwards, respectively.
In this case,  by \cite[Lemma 2.5]{AKLY17}, the three functors: the left adjoint of $A\otimes _B^{\mathbb{L}}-$, $A\otimes _B^{\mathbb{L}}-$ and $\Res^A_B$ restrict to $\Kb{\rm proj}$, by \cite[Lemma 2.8]{AKLY17} and \cite[Lemma 3.2]{OPS19}, the three functors:   $A\otimes _B^{\mathbb{L}}-$,  $\Res^A_B$ and the right adjoint of $\Res^A_B$ restrict to $\Db{\rm mod}$, hence, both  $A\otimes _B^{\mathbb{L}}-$ and   $\Res^A_B$ restrict to $\Sg$
(see \cite[Lemma 1.2]{O04}).

This kind of restriction results are rather useful.
\end{Remark}

Now we will  show that for a bounded extension $A\subset B$, the adjoint pair is an adjoint equivalence. Furthermore, this singular equivalence is of special form, singular equivalence of Morita type with level in the sense of Wang \cite{Wang15}.
%This constitutes the first part of Theorem~\ref{main-thm} in the
%introduction.

\begin{Thm}\label{main-thm-semt}
Let $B\subset A$ be a bounded  extension. Then
$${_A}A\otimes^{\mathbb{L}}_{B}-: \Sg(B)\rightleftharpoons\Sg(A):  \Res^A_B $$ is an adjoint equivalence which induces a singular equivalence of Morita type with level between $A$ and $B$.
\end{Thm}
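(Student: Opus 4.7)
The plan is to apply Lemma~\ref{Lem: restriction to singularity} to produce the adjoint pair at the singularity level, to verify essential surjectivity via a truncated relative bar-type resolution of $A$ as an $A^e$-module, and finally to promote the resulting adjoint equivalence to a singular equivalence of Morita type with level by passing to suitable syzygies of ${}_AA_B$ and ${}_BA_A$ in their bimodule projective resolutions.

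For the first two steps: the hypotheses $\pd({}_BA) < \infty$ and $\pd(A_B) < \infty$ required by Lemma~\ref{Lem: restriction to singularity} are immediate from $\pd({}_BM_B) < \infty$ combined with the short exact sequence $0 \to B \to A \to M \to 0$ of $B$-$B$-bimodules, using that projective $B^e$-modules are projective on each side; the ``moreover'' clause of that lemma then delivers full faithfulness of ${}_AA \otimes^{\mathbb{L}}_B -$. Essential surjectivity reduces to showing that $\Cone(\mu)$ lies in $\Kb{\pmodcat{A^e}}$ for the multiplication $\mu : A \otimes^{\mathbb{L}}_B A \to A$, because the counit $\varepsilon_X : {}_AA \otimes^{\mathbb{L}}_B \Res^A_B(X) \to X$ is $\mu \otimes^{\mathbb{L}}_A X$, and restriction of scalars from $A^e$ to $A$ sends projective bimodules to projective left $A$-modules. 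To produce $\Cone(\mu)$, I will exhibit the truncated relative bar resolution
\[ 0 \to A \otimes_B M^{\otimes_B(p-1)} \otimes_B A \to \cdots \to A \otimes_B M \otimes_B A \to A \otimes_B A \to A \to 0, \]
whose exactness follows from $M^{\otimes_B p} = 0$ together with the Tor-vanishing conclusions of Lemma~\ref{lem-ring-extension-tor}. By Lemma~\ref{Lem: compactness}, every term in positive bar degree lies in $\Kb{\pmodcat{A^e}}$, and by Lemmas~\ref{lem-tensor} and~\ref{lem-ring-extension-tor}(1), $A \otimes^{\mathbb{L}}_B A \cong A \otimes_B A$ in $\Db{\modcat{A^e}}$. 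Splicing these facts places $\Cone(\mu)$ in $\Kb{\pmodcat{A^e}}$, delivering the adjoint equivalence.

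For the final assertion, the obstacle is that the bimodules ${}_AA_B$ and ${}_BA_A$ need not be projective as $B$-modules, as Wang's definition demands. I plan to replace them by deep enough syzygies: for $l \geq \max\{\pd(A_B), \pd({}_BA)\}$, the syzygies $P := \Omega^l_{A \otimes_{\bfk} B^{\mathrm{op}}}({}_AA_B)$ and $Q := \Omega^l_{B \otimes_{\bfk} A^{\mathrm{op}}}({}_BA_A)$ become projective on each side, while representing the same classes as ${}_AA[-l]$ and ${}_BA[-l]$ in the respective singularity categories. Splicing the bar resolution above with projective bimodule resolutions of its terms should then realize $P \otimes_B Q$ as $\Omega^L_{A^e}(A)$ in $\stmodcat{A^e}$ and $Q \otimes_A P$ as $\Omega^L_{B^e}(B)$ in $\stmodcat{B^e}$ for some $L$. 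The main obstacle I anticipate is the careful shift-bookkeeping needed to ensure the identifications hold in the stable module categories (not merely in the singularity categories), which requires working with the specific syzygy representatives supplied by the chosen resolutions rather than with arbitrary modules in the same singularity class.
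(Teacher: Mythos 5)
Your treatment of the adjoint equivalence is essentially the paper's own argument: deduce $\pd({}_BA),\pd(A_B)<\infty$ from the bimodule sequence $0\to B\to A\to M\to 0$, invoke Lemma~\ref{Lem: restriction to singularity} for the adjoint pair and full faithfulness, identify $A\otimes^{\mathbb{L}}_BA$ with $A\otimes_BA$ via Lemmas~\ref{lem-tensor} and~\ref{lem-ring-extension-tor}, and then use the truncated relative bar resolution (this is \cite[Proposition 2.3]{CLMS21b}) together with Lemma~\ref{Lem: compactness} to place $\Cone(\mu)$ in $\Kb{\pmodcat{A^e}}$. Your observation that $\varepsilon_X\cong \mu\otimes^{\mathbb{L}}_AX$ and that perfection of $\Cone(\mu)$ over $A^e$ forces $\varepsilon_X$ to be invertible in $\Sg(A)$ is sound, and is in fact how the paper proves Theorem~\ref{main-thm-semt2}.

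The gap is in the upgrade to a singular equivalence of Morita type with level. The paper does not construct Wang's bimodules by hand: it verifies that $B\to GF(B)$ and $FG(A)\to A$ are isomorphisms in $\Sg(B^e)$ and $\Sg(A^e)$ and then appeals to \cite[Theorem 3.6]{Dal21} and \cite[Proposition 3.3]{Qin22}, which say precisely that an adjoint pair of the form $(A\otimes^{\mathbb{L}}_B-,\ {}_BA\otimes_A-)$ with these properties is a singular equivalence of Morita type with level. Your plan to take $P=\Omega^l_{A\otimes_{\bfk}B^{\op}}(A)$ and $Q=\Omega^l_{B\otimes_{\bfk}A^{\op}}(A)$ and "splice" is exactly the content of those cited results, and the step you defer is the whole difficulty: you must (a) justify that $P\otimes_BQ$ computes a genuine $A^e$-syzygy of $A\otimes_BA$, which needs the $\Tor$-vanishing of Lemma~\ref{lem-ring-extension-tor} applied to the syzygies (not just to $A$ itself) via dimension shifting; and (b) convert the isomorphism $A\otimes_BA\cong A$ in $\Sg(A^e)$ into an isomorphism of syzygies in $\stmodcat{A^e}$, which requires splicing the perfect complex $\Cone(\mu)$ into the resolution and is not automatic from agreement in the singularity category. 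Neither step is carried out, only anticipated. The fix is either to execute this bookkeeping in full (essentially reproving Qin's Proposition~3.3) or, as the paper does, to quote the criterion directly once the two counit/unit cones are shown to be perfect bimodule complexes.
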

\begin{proof}
Denote $M=A/B$. Since $\pd({}_BM_B)<\infty$, we get that $\pd (M_B)<\infty$ and $\pd ({}_BM)<\infty$.
Now the exact sequence of $B$-$B$-bimodules
 $0\rightarrow B\rightarrow A\rightarrow M\rightarrow 0$  shows that $\pd (A_B)<\infty$ and $\pd({}_BA)<\infty$. By   Lemma~\ref{Lem: restriction to singularity},
   there is an adjoint pair
$$ {_A}A\otimes^{\mathbb{L}}_{B}-: \Sg(B)\rightleftharpoons\Sg(A):  \Res^A_B $$and   ${_A}A\otimes^{\mathbb{L}}_{B}-: \Sg(B)\to \Sg(A)  $ is fully faithful.

What we add here is that $${_A}A\otimes^{\mathbb{L}}_{B}-: \Sg(B)\rightleftharpoons\Sg(A):  \Res^A_B $$ is an adjoint equivalence which are in fact singular equivalences of Morita type with level.
 By \cite[Theorem 3.6]{Dal21} and \cite[Proposition 3.3]{Qin22}
it suffices to show that
the natural morphisms $B\rightarrow GF(B)$ and $FG(A)\rightarrow A$
are isomorphisms in $\Sg(B^e)$ and $\Sg(A^e)$, respectively, where
$F={_A}A\otimes^{\mathbb{L}}_{B}-$ and $G=\Res^A_B$.

We observe that the mapping cone of $B\rightarrow GF(B)$ is $M$, and
the assumption
$\pd( _BM_B)<\infty$ yields that $B\cong GF(B)$ in $\Sg(B^e)$.This  also follows from the fact that   ${_A}A\otimes^{\mathbb{L}}_{B}-: \Sg(B)\to \Sg(A)  $ is fully faithful.

By Lemma~\ref{lem-ring-extension-tor} (1) and Lemma~\ref{lem-tensor},
we obtain that
$$FG(A)=A\otimes _B^{\mathbb{L}}A\cong A\otimes _BA \in \Db{\modcat{A^e}},$$
and then the mapping cone of $FG(A)\rightarrow A$ is isomorphic
to the complex $$0\rightarrow A\otimes _BA\stackrel{\mu}{\rightarrow} A\rightarrow 0$$
induced by the product $\mu$ of $A$. Since $M^{\otimes _Bp}=0$ for some integer $p$, it follows from \cite[Proposition 2.3]{CLMS21b}
that there is a long exact sequence
of $A$-$A$-bimodules
$$\xymatrix@C=0.8pc{ 0 \ar[r] & A\otimes _BM^{\otimes _B{(p-1)}}\otimes _BA \ar[r] & \cdots
\ar[r] &  A\otimes _BM\otimes _BA \ar[r] &  A\otimes _BA \ar[r]^-{\mu} & A \ar[r] & 0.}$$
Therefore, % the nature chain map
%$$\xymatrix@C=0.8pc{ 0 \ar[r] & A\otimes _B(M)^{\otimes _B{(p-1)}}\otimes _BA \ar[r] & \cdots
%\ar[r] &  A\otimes _B(M)\otimes _BA \ar[r] \ar[d]& 0 \ar[d] \ar[r] &0\\
%& & 0 \ar[r] & A\otimes _BA \ar[r] & A \ar[r] &0}$$
%is a quasi-isomorphism, and thus
the mapping cone of $FG(A)\rightarrow A$ is isomorphic
to the complex
$$\xymatrix@C=0.8pc{ 0 \ar[r] & A\otimes _BM^{\otimes _B{(p-1)}}\otimes _BA \ar[r] & \cdots
\ar[r] &  A\otimes _BM\otimes _BA \ar[r] & 0}$$
in $\Db{\modcat{A^e}}$. Now  by Lemma~\ref{Lem: compactness},   $A\otimes _BM^{\otimes _Bj}\otimes _BA \in
\Kb{\pmodcat{A^e}}$ for any $j\geq 1$, and thus $FG(A)\cong A$
in $\Sg(A^e)$.
\end{proof}

\begin{Cor}For a bounded extension $B\subset A$, $A$ has finite global dimension if and only if so does $B$.

\end{Cor}
\begin{Remark}
A refinement of the above corollary was proved by  Cibils, Lanzilotta, Marcos, and Solotar for left or right bounded extensions; see
\cite[Theorems 4.2 and 4.3]{CLMS22}.
\end{Remark}
A functor $F:\mathcal{B}\rightarrow \mathcal{C}$ between abelian categories
is called a   $t$-\textit{eventually homological isomorphism}
if there is an integer $t$ such that
for every $j > t$, the induced homomorphism
 $\Ext ^j _{\mathcal{B}}(X, Y ) \to \Ext ^j _{\mathcal{C}}(F(X), F(Y) )$ is an isomorphism
for all objects $X, Y\in \mathcal{B}$.
This notion was introduced in \cite{PSS14} and has been  used widely
to reduce homological
properties of algebras \cite{ GPS21, PSS14, QS23}.

\begin{Thm} \label{main-thm-semt2}
Let $B\subset A$ be a bounded extension. Then the functor $\Res^A_B:\modcat{A}\rightarrow \modcat{B}$ is a $t$-eventually homological isomorphism
for some integer $t$.
\end{Thm}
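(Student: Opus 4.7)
The plan is to construct a distinguished triangle in $\D{A}$ that relates $X \in \modcat{A}$ to $A \otimes^{\mathbb{L}}_B X$, whose error term is a perfect complex of left $A$-modules of length controlled only by the data of the bounded extension. Applying $\mathbb{R}\Hom_A(-, Y)$ together with the adjunction $A \otimes^{\mathbb{L}}_B - \dashv \Res^A_B$ will then reduce the comparison of $\Ext^j_A(X, Y)$ and $\Ext^j_B(X, Y)$ to the vanishing of $\Ext^j_A(K_X, Y)$ for large $j$, uniformly in $X$ and $Y$.

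Concretely, the exact sequence of $A$-$A$-bimodules
\begin{equation*}
0 \to A \otimes_B M^{\otimes_B(p-1)} \otimes_B A \to \cdots \to A \otimes_B M \otimes_B A \to A \otimes_B A \to A \to 0
\end{equation*}
already used in the proof of Theorem~\ref{main-thm-semt}, together with Lemma~\ref{Lem: compactness}, produces a distinguished triangle $K \to A \otimes_B A \to A \to K[1]$ in $\D{A^e}$, where $K$ is quasi-isomorphic to the bounded complex $[A \otimes_B M^{\otimes_B(p-1)} \otimes_B A \to \cdots \to A \otimes_B M \otimes_B A]$ and therefore lies in $\Kb{\pmodcat{A^e}}$. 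Applying $-\otimes^{\mathbb{L}}_A X$ and identifying $(A \otimes_B A) \otimes^{\mathbb{L}}_A X \cong A \otimes^{\mathbb{L}}_B X$ via Lemmas~\ref{lem-ring-extension-tor}(1) and~\ref{lem-tensor}, we obtain a triangle
\begin{equation*}
K_X \to A \otimes^{\mathbb{L}}_B X \to X \to K_X[1]
\end{equation*}
in $\D{A}$, with $K_X := K \otimes^{\mathbb{L}}_A X$. Since every projective $A^e$-module is a summand of $A \otimes_{\bfk} A$ and $(A \otimes_{\bfk} A) \otimes_A X \cong A \otimes_{\bfk} X$ is projective as a left $A$-module, $K_X$ belongs to $\Kb{\pmodcat{A}}$, and its cohomological range is bounded by an integer $N$ depending only on $p$ and on $\max_{1 \leq j \leq p-1}\pd_{A^e}(A \otimes_B M^{\otimes_B j} \otimes_B A)$, hence independent of $X$.

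Applying $\mathbb{R}\Hom_A(-, Y)$ for $Y \in \modcat{A}$ and rewriting $\mathbb{R}\Hom_A(A \otimes^{\mathbb{L}}_B X, Y) \cong \mathbb{R}\Hom_B(X, Y)$ via adjunction produces a triangle
\begin{equation*}
\mathbb{R}\Hom_A(X, Y) \to \mathbb{R}\Hom_B(X, Y) \to \mathbb{R}\Hom_A(K_X, Y) \to \mathbb{R}\Hom_A(X, Y)[1]
\end{equation*}
whose first arrow is readily identified with the canonical restriction map. Since $K_X$ is a bounded complex of projective $A$-modules whose cohomological degrees lie in a range of width at most $N$, the complex $\mathbb{R}\Hom_A(K_X, Y) = \Hom_A(K_X, Y)$ is concentrated in a range of cohomological degrees of width $N$, and hence $\Ext^j_A(K_X, Y) = 0$ for all $j$ outside this range. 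The long exact sequence of cohomology then forces the restriction map $\Ext^j_A(X, Y) \to \Ext^j_B(X, Y)$ to be an isomorphism for all $j$ above some threshold $t$ depending only on the extension, which is exactly the eventually homological isomorphism property. The only potentially subtle point is the uniformity of $t$ in $X$ and $Y$; this is ensured by the fact that $K$ is a single perfect $A^e$-complex fixed by the data of the extension, so the range of $K_X$ is controlled independently of $X$.
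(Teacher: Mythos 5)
Your proposal is correct and follows essentially the same route as the paper: both arguments identify the cone of the counit $A\otimes^{\mathbb{L}}_B X \to X$ with a fixed perfect complex of $A$-$A$-bimodules (coming from the Cibils--Lanzilotta--Marcos--Solotar resolution and Lemma~\ref{Lem: compactness}) tensored over $A$ with $X$, observe that each term is then projective as a left $A$-module with cohomological range independent of $X$, and conclude via the adjunction and the long exact sequence. The only cosmetic difference is that you rotate the triangle and phrase the comparison through $\mathbb{R}\Hom$, whereas the paper applies $\Hom_{D(A)}(-,Y[i])$ directly to the counit triangle.
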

\begin{proof}   Let $F={_A}A\otimes^{\mathbb{L}}_{B}-$ and $G=\Res^A_B$.
Let $\eta : FG\rightarrow 1_{\D{A}}$ be the counit of the adjoint pair
$$\xymatrix@C=2pc{\D{B} \ar@<+1ex>[rr]^{F}
&&\D{A}\ar@<+1ex>[ll]^{G}}.$$
For any $X,Y\in \modcat{A}$, applying $\Hom _{\D{A}}(-, Y[i])$ to  the  canonical triangle
$$\xymatrix{FG(X) \ar[r]^(0.63){\eta _X} & X
 \ar[r] & \Cone (\eta _X) \ar[r] & FG(X)[1]}$$
in $\D{A}$,
 we get
a long exact
sequence
$$\xymatrix@C=0.5pc{\cdots \ar[r] & \Hom _{\D{A}}(\Cone (\eta _X), Y[i]) \ar[r] &
\Hom _{\D{A}}(X, Y[i]) \ar[r] & \Hom _{\D{A}}(FG(X), Y[i])\ar[r] & \cdots  (\star).
}$$
Since $FG(X)\cong FG(A)\otimes ^{\mathbb{L}} _AX$, it follows from the commutative diagram
 $$\xymatrix@C=2pc{FG(X) \ar[r]^{\eta _X} \ar[d]^{\cong} & X
\ar[d]^{\cong} \ar[r] & \Cone (\eta _X) \ar[r]\ar[d] &FG(X)[1] \ar[d]^{\cong}
\\ FG(A)\otimes ^{\mathbb{L}}_AX \ar[r]^(0.6){\eta _A\otimes ^{\mathbb{L}}_AX }  & A\otimes ^{\mathbb{L}} _AX \ar[r]  & \Cone (\eta _A)\otimes ^{\mathbb{L}}_AX \ar[r] & FG(A)\otimes ^{\mathbb{L}}_AX[1]}$$
that $\Cone (\eta _X)\cong \Cone (\eta _A)\otimes ^{\mathbb{L}}_AX $ in $\D{A}$.
On the other hand, it follows from the proof of Theorem \ref{main-thm-semt} that
$\Cone (\eta _A)\in\Kb{\pmodcat{A^e}}$.
Hence, there is a bounded complex of projective $A$-$A$-bimodules
$$P^\bullet: 0\rightarrow P^{-t}\longrightarrow \cdots
\longrightarrow P^{s} \longrightarrow
0 $$ such that $P^\bullet \cong \Cone (\eta _A)$ in $\D{A^e}$.
Therefore, the complex $\Cone (\eta _X)\cong P^\bullet \otimes _AX $ in $\D{A}$
is of the form $$0\rightarrow P^{-t}\otimes _AX \longrightarrow \cdots
\longrightarrow P^{s}\otimes _AX \longrightarrow
0 ,$$ where each $_AP^{i}\otimes _AX\in \proj(A)$ since $P^{i}\in \proj(A^e)$.
It follows that $$\Hom _{\D{A}}(\Cone (\eta _X), Y[i])\cong\Hom _{\D{A}}(P^\bullet \otimes _AX, Y[i])\cong \Hom _{\mathrm{K}(A)}(P^\bullet \otimes _AX, Y[i]),$$
which is equal to zero for any $i>t$. Therefore, the long exact sequence $(\star)$
yields that $$\Hom _{\D{A}}(X, Y[i]) \cong \Hom _{\D{A}}(FG(X), Y[i]) \cong \Hom _{\D{B}}(G(X), G(Y)[i])$$
for any $i>t$. Therefore, $\Ext _A^i(X,Y)\cong \Ext _B^i(G(X),G(Y))$ for any $i>t$
and any $X,Y\in \modcat{A}$. This shows that $G$ is a $t$-eventually homological isomorphism.
\end{proof}

\section{The stable categories of Gorenstein
projective modules and the Gorenstein defect categories
for bounded  extensions}\label{Sect: Gorenstein}

In this section,  we will compare the stable categories of Gorenstein
projective modules and the Gorenstein defect categories
between two algebras linked by a bounded  extension.

Throughout this section, let  $\Lambda$ and $\Gamma$ be two   finite dimensional $\bfk$-algebras and  let  $X$ be a finite dimensional $\Gamma$-$\Lambda$-bimodule such that $X$ has finite projective dimension both as left $\Gamma$-module and as right $\Lambda$-module.

We will consider the following question which generalises the situation considered in \cite{OPS19}:
\begin{Ques}\label{Ques: fundamental}
     When does the functor $X\otimes_\Lambda-:\modcat{\Lambda} \rightarrow \modcat{\Gamma}$  induce a triangle functor from $\stgp{\Lambda}$ to $ \stgp{\Gamma}$?
\end{Ques}
Some remarks are in order. Following the spirit of  Remark~\ref{Rem: Restrition to singularity}, under the condition that  $\pd(_\Gamma X), \pd(X_\Lambda)<\infty$, the adjoint pair between derived categories
$$\xymatrix@!=12pc{ \D{\Lambda} \ar@<+1.5ex>[r]|{X\otimes_\Lambda^{\bbL}- } & \D{\Gamma}
\ar@<+1.5ex>[l]|{\bbR\Hom_\Gamma(X,-) }} $$
  extends one step upwards and one step  downwards and we obtain an adjoint quadruple:
$$\xymatrix@!=12pc{ \D{\Lambda} \ar@<+1.5ex>[r]|{X\otimes_\Lambda^{\bbL}-\simeq \bbR\Hom(X^{\tr_{\Lambda^{\op}}},-)}
\ar@<-4.5ex>[r]|{\bbR\Hom_\Lambda( X^{\tr_{\Gamma}},-)}
& \D{\Gamma} \ar@<-4.5ex>[l]|{X^{\tr_{\Lambda^{\op}}}  \otimes _\Gamma^{\bbL}-}
\ar@<+1.5ex>[l]|{\bbR\Hom_\Gamma(X,-)\simeq X^{\tr_{\Gamma}}  \otimes _\Gamma^{\bbL}- }}, $$
where $$X^{\tr_{\Gamma}}=\bbR\Hom_\Gamma(X,\Gamma), \  X^{\tr_{\Lambda^{\op}}}=\bbR\Hom_{\Lambda^{\op}}(X,\Lambda).$$
 By \cite[Lemma 2.5]{AKLY17}, the two functors  $X^{\tr_{\Lambda^{\op}}}  \otimes _\Gamma^{\bbL}-$ and $X\otimes_\Lambda^{\bbL}-\simeq \bbR\Hom(X^{\tr_{\Lambda^{\op}}},-) $ restrict to $\Kb{\rm proj}$, by \cite[Lemma 2.8]{AKLY17} and \cite[Lemma 3.2]{OPS19} (or more generally for differential graded algebras, \cite[Proposition 2.5]{JYZ23}) the two functors $X\otimes_\Lambda^{\bbL}-\simeq \bbR\Hom(X^{\tr_{\Lambda^{\op}}},-) $ and  $ \bbR\Hom_\Gamma(X,-)\simeq X^{\tr_{\Gamma}}  \otimes _\Gamma^{\bbL}- $ restrict to $\Db{\rm mod}$, hence, $X\otimes_\Lambda^{\bbL}-\simeq \bbR\Hom(X^{\tr_{\Lambda^{\op}}},-) $  restricts to $\Sg$
(see \cite[Lemma 1.2]{O04}).

We give several criteria in  order to answer Question~\ref{Ques: fundamental}.

The first criterion is equivalent to asking that the adjoint quadruple can be extended  further one step downwards.

\begin{Crit}[{Compare with \cite[Propisition 3.5]{OPS19}}] \label{Crit: restr-Gproj0}    If  $X^{\tr_{\Gamma}}=\mathbb{R}\Hom_\Gamma(X,\Gamma)$ is perfect as complex of $\Lambda$-modules, then  the triangle functor
 $X\otimes_\Lambda^{\bbL}-\simeq \bbR\Hom(X^{\tr_{\Lambda^{\op}}},-) $  restricts to  the stable categories of Gorenstein
projective modules.
\end{Crit}

 \begin{proof}
This can be deduced from Hu and Pan  \cite{HP17} and also is made explicit in \cite{OPS19} in case that  there is  a ring homomorphism $f: \Lambda\to \Gamma$ and $X={}_\Gamma \Gamma_\Lambda$.

In fact, the condition that $X^{\tr_{\Gamma}}=\mathbb{R}\Hom_\Gamma(X,\Gamma)$ is perfect as complex of $\Lambda$-modules implies that
the adjoint quadruple:
$$\xymatrix@!=12pc{ \D{\Lambda} \ar[r]|{X\otimes_\Lambda^{\bbL}-\simeq \bbR\Hom(X^{\tr_{\Lambda^{\op}}},-)}
\ar@<-6ex>[r]|{\bbR\Hom_\Lambda( X^{\tr_{\Gamma}},-)}
& \D{\Gamma} \ar@<-3ex>[l]|{X^{\tr_{\Lambda^{\op}}}  \otimes _\Gamma^{\bbL}-}
\ar@<+3ex>[l]|{\bbR\Hom_\Gamma(X,-)\simeq X^{\tr_{\Gamma}}  \otimes _\Gamma^{\bbL}- }}$$
can be extended further one step downwards, because
$\bbR\Hom_\Lambda( X^{\tr_{\Gamma}},-)\simeq ( X^{\tr_{\Gamma}})^{\tr_{\Lambda}}\ot_{\Lambda}^\bbL-$ has a right adjoint
$\bbR\Hom_{\Lambda}((X^{\tr_{\Gamma}})^{\tr_{\Lambda}}, -)$.
The functor $$G:=\bbR\Hom_\Gamma(X,-)\simeq X^{\tr_{\Gamma}}  \otimes _\Gamma^{\bbL}-: \Db{\modcat{\Gamma}}\to \Db{\modcat{\Lambda}}$$ sends $\Kb{\pmodcat{\Gamma}}$ to $\Kb{\pmodcat{\Lambda}}$. Moreover, since $\pd(X_\Lambda)<\infty$, a suitable shift of  $F:=X\otimes_\Lambda^{\bbL}-$ is nonnegative in the sense of \cite[Definition 4.1]{HP17}.  By \cite[Theorem 5.3]{HP17}, $F$ restricts to the stable categories of Gorenstein
projective modules.
\end{proof}
The condition that  $X^{\tr_{\Gamma}}=\mathbb{R}\Hom_\Gamma(X,\Gamma)$ is perfect as complex of $\Lambda$-modules is somehow    strong. We will introduce two alternative conditions in the sequel; see Criteria~\ref{Crit: restr-Gproj2} and \ref{Crit: restr-Gproj3}.

  %  As we will see soon, it would be better to consider a little larger subcategory of the bounded derived category or the singularity category instead of  the  stable  category of Gorenstein projective modules.
We need several notations.
For an algebra $A$, we denote by $\Omega _A (-)$ the syzygy
endofunctor of the stable module category of $A$. Define
$${}^{\perp}A:=\{X\in \modcat{A} \ |\ \Ext_A^i(X, A)=0\mbox{ for all }i>0\}.$$
Notice that a module  in this subcategory is called \emph{semi-Gorenstein projective} by Ringel and Zhang \cite{RZ20}.
  Denote
$${\rm Gproj}(A)^{\bot}:=\{X\in \modcat{A}\mid \Ext_{A}^{i}(U,X)=0 \mbox{ for all } U\in \gpmodcat{A},\, i\ge 1\}.$$
%Recall that  $\fGd{A}$ is the full subcategory of
%$\Db{\modcat{A}}$ formed by those complexes quasi-isomorphic to bounded complexes of Gorenstein projective modules.
%Here, the definition of $\fGd{A}$ agrees with that in \cite{Kato02},
%where the objects in $\fGd{A}$ are called complexes of finite Gorenstein projective dimension,
%see \cite[Definition 2.7 and Proposition 2.10]{Kato02}.
%The following triangle equivalence
%$$ \stgp{A} \simeq \fGd{A}/\Kb{\pmodcat{A}}$$
%is well known, see, for instance,  \cite[Theorem 4.4.1]{Buch21},
% \cite[Theorem 4]{PZ15},  \cite[Lemma 4.1]{CR20}, or in a much more general setup \cite[Theorem 3.7]{YZ22}.
%It will be this category $\fGd{A} $ on which we will mainly work on.

%We begin with an observation which follows from  \cite[Proposition 3.7]%%{OPS19}.
%\begin{Prop}
%Let $B\subset A$ be a bounded extension. Then ${}_AA\otimes_B-: \stgp{B}\ra  \stgp{A}$ is fully faithful.
%\end{Prop}
%Now we will split the second part of Theorem~\ref{main-thm}  presented in %the
%introduction  into two results.

The second criterion concerns a right exact functor between module categories which,  under certain conditions,  sends  Gorenstein projective modules to  Gorenstein projective modules.
\begin{Crit}\label{Crit: restr-Gproj1}
  If  ${}_\Gamma X$ is projective and   $\Hom_{\Gamma}(X,\Gamma) \in {\rm Gproj}(\Lambda)^{\bot}$, then $X\otimes _\Lambda-:\modcat{\Lambda} \rightarrow \modcat{\Gamma}$   sends Gorenstein projective modules to Gorenstein projective modules and $X\otimes_\Lambda U\cong X\otimes_\Lambda^{\mathbb{L}} U \in \Db{\modcat{\Gamma}}$ for  any $U\in {\rm Gproj}(\Lambda)$. In this case,   there is  an induced triangle functor
$ X\otimes _\Lambda-:  \stgp{\Lambda} \rightarrow \stgp{\Gamma}$.
\end{Crit}

\begin{proof}
Let $U\in {\rm Gproj}(\Lambda)$. By definition, there is a complete  resolution $\cpx{P}$ of $U$.
It follows from $X\in \pmodcat{\Gamma}$ and $P^i\in \pmodcat{\Lambda}$ that $_{\Gamma}X\otimes_\Lambda P_i\in \pmodcat{\Gamma}$ for each $i$. As $\cpx{P}$ is exact, we see that $\rmH^i(X\otimes_\Lambda \cpx{P})=\Tor_{j-i}^{\Lambda}(X, B^{j+1}(\cpx{P}))$ for any $j>i$. Since the projective dimension of $X$ as right $\Lambda$-module is assumed to be finite, we can always choose $j$ sufficiently big so that this $\Tor$ group vanishes. Thus $X\otimes_\Lambda \cpx{P}$ is exact, which also shows that $\Tor_{i}^A(X, U)=0$ for $i>0$. By Lemma \ref{lem-tensor}, ${}_\Gamma X\otimes^\mathbb{L}_\Lambda U\cong {}_\Gamma X\otimes_\Lambda U \in  \Db{\Lambda}$ for $U\in {\rm Gproj}(\Lambda)$.
Note that $$\Hom_\Gamma(X\otimes_\Lambda \cpx{P}, \Gamma)\cong \Hom_\Lambda(\cpx{P},\Hom_\Gamma(X, \Gamma)).$$
It follows from $\Hom_{\Gamma}(X,\Gamma) \in {\rm Gproj}(\Lambda)^{\bot}$ that $\Hom_\Lambda(\cpx{P},\Hom_\Gamma(X, \Gamma))$ and $\Hom_\Gamma(X\otimes_\Lambda \cpx{P}, \Gamma)$ are  exact.
Thus $X\otimes_\Lambda \cpx{P}$ is a complete resolution of $X\ot_\Lambda U$, and therefore $X\ot_\Lambda U$ is a Gorenstein projective $\Gamma$-module.

For the second statement, it remains to show that the additive functor $ X\otimes _\Lambda-:  \stgp{\Lambda} \rightarrow \stgp{\Gamma}$ is a triangle functor.
Indeed, due to ${}_\Gamma X\otimes^\mathbb{L}_\Lambda U\cong {}_\Lambda X\otimes_\Gamma U \in  \Db{\Lambda}$ for $U\in {\rm Gproj}(\Lambda)$,
  the functor ${}_\Gamma X\otimes^\mathbb{L}_\Lambda-$ induces a triangle functor from $\fGd{\Gamma}$ to $\fGd{\Lambda}$. We have thus a commutative diagram
$$\xymatrix{\stgp{\Lambda}\ar[r]^{{}_\Gamma X\otimes_\Lambda-}\ar[d]^{\simeq}& \stgp{\Gamma}\ar[d]^{\simeq}\\
\fGd{\Lambda}/\Kb{\pmodcat{\Lambda}}\ar[d]\ar[r]^{{}_\Gamma X\otimes_\Lambda ^\mathbb{L}-} & \ar[d]\fGd{\Gamma}/\Kb{\pmodcat{\Gamma}}\\
\Sg(\Lambda)\ar[r]^{{}_\Gamma X\otimes_\Lambda ^\mathbb{L}-} &\Sg(\Gamma) }$$
Thus $ X\otimes _\Lambda-:  \stgp{\Lambda} \rightarrow \stgp{\Gamma}$ is a triangle functor.
\end{proof}

 \begin{Remark}Although Criterion~\ref{Crit: restr-Gproj1} is somehow trivial, it generalises \cite[Proposition 3.4]{OPS19} in which the authors consider the situation where there is  a ring homomorphism $f: \Lambda\to \Gamma$ and $X={}_\Gamma \Gamma_\Lambda$.

\end{Remark}

\begin{Remark} In the hypotheses of Criterion~\ref{Crit: restr-Gproj1}, if we assume that   ${}_\Gamma X$ is projective, then  $\Hom_{\Gamma}(X,\Gamma)$ $\in {\rm Gproj}(\Lambda)^{\bot}$ is equivalent to saying that   $X\otimes_\Lambda U$ belongs to ${}^\perp\Gamma$ whenever   $U\in {\rm Gproj}(\Lambda)$. In fact, for  $G\in {\rm Gproj}(\Lambda)$ and a deleted projective   resolution $\cpx{Q}$ of $G$,
   $X\otimes_\Lambda \cpx{Q}$ is exact which is a projective resolution of $X\otimes_\Lambda G$. By the isomorphism of complexes $$\Hom_\Lambda(\cpx{Q}, \Hom_{\Gamma}(X,\Gamma))\cong \Hom_\Gamma(X\otimes_\Lambda\cpx{Q},  \Gamma),$$
   for any $i>0$, there is an  isomorphism
$$\Ext_\Lambda^i(G, \Hom_{\Gamma}(X,\Gamma))\cong \Ext_\Gamma^i(X\otimes_\Lambda G,  \Gamma).$$
The equivalence deduces from the above isomorphisms.
\end{Remark}
The  following example verifies the hypothesis of Criterion~\ref{Crit: restr-Gproj1}, but not that of Criterion~\ref{Crit: restr-Gproj0}.
\begin{Exam}\cite[2 Example]{HKR08}
Let $\Gamma$ be a finite dimensional \bfk-algebra and $\Lambda=kQ/I$ be the algebra where $Q$ is the quiver
$$\xymatrix{1   \ar@(ul,dl)_\beta & 2 \ar[l]_\alpha }$$
and $I=\langle \beta ^2,  \beta \alpha \rangle$. Let $X=\Gamma \otimes_{\bfk}  e_1\Lambda $. It is clearly that $_\Gamma X_\Lambda$ is a projective bimodule, thus  $_\Gamma X, X_\Lambda$ are projective,  and $\Hom_{\Gamma}(X, \Gamma) \cong \Hom_\bfk(e_1\Lambda, \bfk) \otimes_{\bfk} \Gamma \in {\rm Gproj}(\Lambda)^{\bot}.$ But $\Hom_\bfk(e_1\Lambda, \bfk)$ has infinite projective dimension as left $\Lambda$-module.  Hence, $\bbR\Hom_{\Gamma}(X, \Gamma)= \Hom_{\Gamma}(X, \Gamma)$ falls into ${\rm Gproj}(\Lambda)^{\bot}$, but not perfect as $\Lambda$-module.

\end{Exam}

\begin{Lem}\label{Lem: Gp orthogonal}
Let $\Sigma$ be a finite dimensional $\bfk$-algebra and $M\in \modcat{\Sigma}$. If $M$ is  quasi-isomorphic to a bounded complex with each term in $ {\rm Gproj}(\Sigma)^{\bot}$, then $M\in  {\rm Gproj}(\Sigma)^{\bot}$.
\end{Lem}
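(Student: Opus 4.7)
The plan is to deduce $\Ext_\Sigma^i(U,M)=0$ for all $U\in \gpmodcat{\Sigma}$ and all $i\ge 1$ by combining the finite ${\rm Gproj}(\Sigma)^{\bot}$-coresolution of $M$ extracted from $\cpx C$ with a cosyzygy shift coming from the complete projective resolution of $U$. First I would put the complex into a convenient normal form. Say $\cpx C$ has terms in degrees $[a,b]$; the quasi-isomorphism with $M$ forces $a\le 0\le b$. When $a<0$, the vanishing of $\rmH^a(\cpx C)$ makes $d^a$ injective, so I may replace the bottom two terms $C^a\to C^{a+1}$ by the cokernel $Q=C^{a+1}/d^a(C^a)$; the short exact sequence $0\to C^a\to C^{a+1}\to Q\to 0$ together with $C^a,C^{a+1}\in {\rm Gproj}(\Sigma)^{\bot}$ and the long exact $\Ext$-sequence forces $Q\in {\rm Gproj}(\Sigma)^{\bot}$, while the resulting shorter complex still represents $M$. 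Iterating reduces to the case of an exact sequence
\[
0\to M\to C^0\to C^1\to \cdots \to C^b\to 0
\]
of $\Sigma$-modules with every $C^i$ in ${\rm Gproj}(\Sigma)^{\bot}$.

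Setting $Z^i=\ker d^i$ (so $Z^0=M$ and $Z^b=C^b$), this breaks into short exact sequences $0\to Z^i\to C^i\to Z^{i+1}\to 0$. For any $V\in \gpmodcat{\Sigma}$, the vanishing of $\Ext_\Sigma^i(V,C^j)$ for $i\ge 1$ kills the relevant adjacent terms in the associated long exact $\Ext$-sequences, so iterated dimension shifting yields $\Ext_\Sigma^n(V,M)\cong \Ext_\Sigma^{n-b}(V,C^b)$ for $n\ge b+1$, which vanishes because $C^b\in {\rm Gproj}(\Sigma)^{\bot}$. Hence $\Ext_\Sigma^n(V,M)=0$ for every $V\in \gpmodcat{\Sigma}$ and every $n\ge b+1$.

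To bring this vanishing down to all $n\ge 1$, I would use that any $U\in \gpmodcat{\Sigma}$ fits into a short exact sequence $0\to U\to P\to U'\to 0$ with $P$ projective and $U'\in \gpmodcat{\Sigma}$, arising from the right half of the complete projective resolution of $U$. The long exact sequence for $\Ext_\Sigma^\ast(-,M)$ then yields $\Ext_\Sigma^j(U,M)\cong \Ext_\Sigma^{j+1}(U',M)$ for all $j\ge 1$; iterating this $b$ times gives $\Ext_\Sigma^j(U,M)\cong \Ext_\Sigma^{j+b}(U^{(b)},M)=0$ by the previous paragraph, where $U^{(b)}$ denotes the $b$-th cosyzygy of $U$. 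This forces $M\in {\rm Gproj}(\Sigma)^{\bot}$. The main obstacle I expect is ensuring that the reduction in the first paragraph preserves membership in ${\rm Gproj}(\Sigma)^{\bot}$: a symmetric truncation from the top would introduce a kernel whose only guaranteed $\Ext$-vanishing is in degrees $\ge 2$, so a bottom-up reduction is essential.
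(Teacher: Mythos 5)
Your argument is correct. It shares its backbone with the paper's proof: both exploit the short exact sequences $0\to K^i\to Q^i\to K^{i+1}\to 0$ coming from a complete resolution of $U$ to rewrite $\Ext_\Sigma^{p}(U,M)$ as $\Ext_\Sigma^{p+j}(K^j,M)$ with $K^j\in\gpmodcat{\Sigma}$ and $p+j$ larger than the width of the given complex, and then kill this high-degree $\Ext$ using boundedness. Where you differ is in how that last vanishing is established. The paper stays in $\Db{\modcat{\Sigma}}$, identifies $\Ext_\Sigma^{p+j}(K^j,M)$ with $\Hom_{\Db{\modcat{\Sigma}}}(K^j,\cpx{L}[j+p])$, and invokes \cite[Lemma 1.6]{Kato02} to conclude that morphisms into a shift beyond the top of $\cpx{L}$ vanish once $\Ext^i_\Sigma(K^j,L^q)=0$ for all $i\ge 1$. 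You instead first normalize the complex: the brutal bottom-up truncation, together with the two-out-of-three property of ${\rm Gproj}(\Sigma)^{\bot}$ in the long exact $\Ext$-sequence, converts $\cpx{L}$ into a genuine finite coresolution $0\to M\to C^0\to\cdots\to C^b\to 0$ with all $C^i\in{\rm Gproj}(\Sigma)^{\bot}$, after which ordinary dimension shifting gives $\Ext_\Sigma^n(V,M)=0$ for $n\ge b+1$. Your remark that the truncation must be done from the bottom (so that the new end term is a cokernel, controlled by $\Ext^{\ge 1}$-vanishing, rather than a kernel) is exactly the right point of care. The net effect is a self-contained, purely module-theoretic proof that avoids the derived-category formalism and the external reference; the paper's route is shorter on the page but outsources the truncation bookkeeping to Kato's lemma.
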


\begin{proof}
Let $M$ quasi-isomorphic to
$$\cpx{L}: 0\ra L^{s} \ra L^{s+1}\ra \cdots \ra L^{t}\ra 0  $$
with $L^j \in {\rm Gproj}(\Sigma)^{\bot}, \forall j$.
For any $U\in {\rm Gproj}(\Sigma)$,  let $(\cpx{Q},\cpx{\partial})$ be a complete   resolution of $U$ with $U=\ker(\partial^0)$. Let $K^i:=\ker(\partial^i)$. Then $K^i\in \gpmodcat{\Sigma}$. We have
\begin{align*}
\Ext_\Sigma^{p}(U,M)
&=\Ext_\Sigma^{j+p}(K^j,M)\\
&=\Hom_{\Db{\modcat{\Sigma}}}(K^j,M[j+p])\\
&\cong \Hom_{\Db{\modcat{\Sigma}}}(K^j,\cpx{L}[j+p]).
\end{align*}
Due to $K^j\in \gpmodcat{\Sigma}$, $\Ext^i_\Sigma(K^j, L^q)=0$ for $i\ge 1$ and $s\le q\le t$. By  \cite[Lemma 1.6]{Kato02}, $\Hom_{\Db{\modcat{\Sigma}}}(K^j,\cpx{L}[j+p])=0$ for $j\ge t-p+1$. Then $\Ext_\Sigma^{p}(U,M)=0$ for each $p\geq 1$, that is, $M\in {\rm Gproj}(\Sigma)^{\bot}$.
\end{proof}

The following result, which is our third criterion,  can be considered as a derived version of Criterion~\ref{Crit: restr-Gproj1}.
\begin{Crit}\label{Crit: restr-Gproj2}
   Assume  that   $\mathbb{R}\Hom_\Gamma(X,\Gamma)$ is quasi-isomorphic to a bounded complex with each term in ${\rm Gproj}(\Lambda)^{\bot}$. Then the functor $X\otimes _\Lambda-:\modcat{\Lambda} \rightarrow \modcat{\Gamma}$   induces  $X\otimes^{\mathbb{L}}_\Lambda-: \fGd{\Lambda}  \ra \fGd{\Gamma}$.
\end{Crit}

\begin{proof}
By \cite[Lemma 2.8]{AKLY17}, the functor $X\otimes _\Lambda-:\modcat{\Lambda} \rightarrow \modcat{\Gamma}$ induces $$X\otimes^{\mathbb{L}}_\Lambda-: \Db{\modcat{\Lambda}}\ra \Db{\modcat{\Gamma}}.$$ It suffices to prove that $X\otimes _\Lambda U \in \fGd{\Gamma}$ for any $U\in {\rm Gproj}(\Lambda)$.

Consider a projective resolution $\cpx{P}$ of $X$ as $\Gamma\otimes_k \Lambda^{\op}$-module:
$$\cdots\ra P^{i}\raf{d^i} P^{i+1}\raf{d^{i+1}} \cdots \ra P^{-1}\raf{d^{-1}} P^0\ra {_\Gamma X_\Lambda}\ra 0$$
with $P^1={_\Gamma X_\Lambda}$.  Denote $$P_{\leq 0}^{\bullet}:=(\cdots\ra P^{i}\raf{d^i} P^{i+1}\raf{d^{i+1}} \cdots \ra P^{-1}\raf{d^{-1}} P^0\ra 0).$$
It follows from $\pd(X_\Lambda)<\infty$ that $\Tor_i^{\Lambda}(X,U)=0$ for any $i>0$ and any $U\in \gpmodcat{\Lambda}$. Indeed, this can
be proved by applying ${}_{\Gamma}X\otimes_{\Lambda}-$ to the complete projective resolution of $U$ and doing dimension shifting.
Thus the complex $\cpx{P}\otimes_\Lambda U$
is exact, and
${_\Gamma X}\otimes_\Lambda U\cong {}_\Gamma X\otimes^{\mathbb{L}}_\Lambda U\cong\cpx{P}_{\leq 0}\otimes_\Lambda U$ in $\Db{\modcat{\Gamma}}$ by Lemma \ref{lem-tensor}.

Let $l:=\mathrm{max}\{\pd(_\Gamma X),\pd(X_\Lambda)\}$ and $N:=\ker(d^{-l+1})$. Then both $_\Gamma N$ and $N_\Lambda$  are projective. Thus ${_\Gamma X}\otimes_\Lambda U$ is  quasi-isomorphic to
$$0\ra N\otimes_\Lambda U\ra P^{-l+1}\otimes_A U\ra \cdots \ra P^{-1}\otimes_\Lambda U\ra P^0\otimes_\Lambda U\ra 0,$$
where $P^{i}\otimes_\Lambda U$ is a projective $\Gamma$-module for $-l+1\le i\le 0$. We will prove that $\Hom_\Gamma(N, \Gamma)\in {\rm Gproj}(\Lambda)^{\bot} $, and then by Criterion~\ref{Crit: restr-Gproj1}, it follows  $N\otimes_\Lambda U\in {\rm Gproj}(\Gamma)$. Consequently, $X\otimes_\Lambda U$ is quasi-isomorphic to a complex in $\fGd{\Gamma}$. This shows that
$X\otimes^{\mathbb{L}}_\Lambda-$ sends a Gorenstein projective module into  $\fGd{\Gamma}$, thereby proving the statement.

It remains to prove $\Hom_\Gamma(N, \Gamma)\in {\rm Gproj}(\Lambda)^{\bot}$. Indeed, denote
$$P_{[-l+1, 0]}^{\bullet}:=(0\ra P^{-l+1}\ra  P^{-l+2}\ra  \cdots \ra P^{-1}\raf{d^{-1}} P^0\ra 0).$$
Applying the functor $\mathbb{R}\Hom_\Gamma(-,\Gamma):\D{\Gamma\otimes_{\bfk}\Lambda^{\rm op}}\ra \D{\Lambda}$ to the triangle in $\D{\Gamma\otimes_{\bfk}\Lambda^{\rm op}}$:
$$N[l-1]\ra \cpx{P}_{[-l+1, 0]}\ra X\ra N[l],$$
we obtain the following triangle in $\D{\Lambda}$:
$$\mathbb{R}\Hom_\Gamma(X,\Gamma)\ra \mathbb{R}\Hom_\Gamma(\cpx{P}_{[-l+1, 0]},\Gamma)\ra \mathbb{R}\Hom_\Gamma(N,\Gamma[-l+1])\ra \mathbb{R}\Hom_\Gamma(X,\Gamma)[1]. $$
Since $P^i$ is a projective $\Gamma\otimes_{\bfk}\Lambda^{\rm op}$-module for each $-l+1\le i\le 0$, $\Hom_\Gamma(P^i,\Gamma)\in \inj (\Lambda)$ and $\mathbb{R}\Hom_\Gamma(\cpx{P}_{[-l+1, 0]},\Gamma)\cong \Hom_\Gamma(\cpx{P}_{[-l+1, 0]},\Gamma)\in \Db{\modcat{\Lambda}} $  has all terms lying  in $\inj (\Lambda)$.
By assumption, $\mathbb{R}\Hom_\Gamma(X,\Gamma)$ has all terms in ${\rm Gproj}(\Lambda)^{\bot}$. Since $\inj (\Lambda) \subseteq {\rm Gproj}(\Lambda)^{\bot}$, it follows  that $\mathbb{R}\Hom_\Gamma(N,\Gamma[-l+1])$ is quasi-isomorphic to a bounded complex
with each term in ${\rm Gproj}(\Lambda)^{\bot}$.  Since $N$ is projective left $\Gamma$-module, $\Hom_\Gamma(N, \Gamma)\cong \mathbb{R}\Hom_\Gamma(N,\Gamma)$ in $\Db{\modcat{\Lambda}}$. By Lemma~\ref{Lem: Gp orthogonal}, $\Hom_\Gamma(N, \Gamma)\in {\rm Gproj}(\Lambda)^{\bot}$.
\end{proof}

\begin{Remark} By the definition of Gorenstein modules, we have $\Lambda\in {\rm Gproj}(\Lambda)^{\bot}$, and therefore ${\rm proj}(\Lambda)$ is contained within ${\rm Gproj}(\Lambda)^{\bot}$. Consequently, if $\mathbb{R}\Hom_\Gamma(X,\Gamma)$ is perfect as complex of $\Lambda$-modules, then   $\mathbb{R}\Hom_\Gamma(X,\Gamma)$ is quasi-isomorphic to a bounded complex with each term in ${\rm Gproj}(\Lambda)^{\bot}$. Thus Criterion~\ref{Crit: restr-Gproj2} generalises Criterion~\ref{Crit: restr-Gproj0}.

However, when $\Lambda$ is Gorenstein, ${\rm Gproj}(\Lambda)^{\bot}$ is exactly the subcategory of  modules of finite projective dimension. In this case, the  thick subcategory generated by ${\rm Gproj}(\Lambda)^{\bot}$ within $ \Db{\modcat{\Lambda}}$ is exactly $\Kb{\pmodcat{\Lambda}}$. Thus, under the Gorenstein condition, the two criteria coincide.
\end{Remark}

Now we apply Criteria~\ref{Crit: restr-Gproj1} and  \ref{Crit: restr-Gproj2} to bounded extensions.
We begin with an observation that follows from \cite[Theorem I (iii) or Proposition 3.7]{OPS19}.
\begin{Prop}
Let $B\subset A$ be a bounded extension. Then ${}_AA\otimes_B-: \stgp{B}\ra  \stgp{A}$ is fully faithful.
\end{Prop}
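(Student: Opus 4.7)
The plan is to first establish that ${}_AA\otimes_B-$ actually induces a triangle functor $\stgp{B}\to\stgp{A}$, and then to deduce full faithfulness from the singular equivalence already proved in Theorem \ref{main-thm-semt}.

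For the first step I would apply Criterion \ref{Crit: restr-Gproj1} to the bimodule $X={}_AA_B$ with $\Gamma=A$ and $\Lambda=B$. Plainly ${}_AA$ is projective, and the bounded extension hypothesis yields $\pd({}_BA),\pd(A_B)<\infty$ via the short exact sequence $0\to B\to A\to A/B\to 0$ and $\pd({}_BM_B)<\infty$. The canonical isomorphism $\Hom_A({}_AA_B,A)\cong A$ of $B$-$A$-bimodules then reduces the remaining hypothesis to the claim ${}_BA\in{\rm Gproj}(B)^{\perp}$. This is a standard dimension-shifting argument: since any $U\in\gpmodcat{B}$ satisfies $\Ext_B^{i}(U,P)=0$ for all $i\ge 1$ and every projective $P$, an induction along a finite projective resolution of ${}_BA$ gives $\Ext_B^{i}(U,{}_BA)=0$ for all $i\ge 1$. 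Criterion \ref{Crit: restr-Gproj1} accordingly produces a triangle functor $F:={}_AA\otimes_B-:\stgp{B}\to\stgp{A}$ and, what will be crucial below, the identification $A\otimes_B U\cong A\otimes_B^{\mathbb{L}}U$ in $\Db{\modcat{A}}$ for every $U\in\gpmodcat{B}$.

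For full faithfulness, recall from Theorem \ref{main-thm-semt} that ${}_AA\otimes_B^{\mathbb{L}}-:\Sg(B)\to\Sg(A)$ is a triangle equivalence, and that the natural embeddings $\iota_B:\stgp{B}\hookrightarrow\Sg(B)$ and $\iota_A:\stgp{A}\hookrightarrow\Sg(A)$ are fully faithful triangle functors by the theorem of Buchweitz recalled in the introduction. The quasi-isomorphism $A\otimes_B U\cong A\otimes_B^{\mathbb{L}}U$ for $U\in\gpmodcat{B}$ obtained in the previous step shows that the square
\[
\begin{array}{ccc}
\stgp{B} & \stackrel{F}{\longrightarrow} & \stgp{A}\\
\iota_B\downarrow & & \downarrow\iota_A \\
\Sg(B) & \stackrel{\sim}{\longrightarrow} & \Sg(A)
\end{array}
\]
commutes up to natural isomorphism. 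As the bottom row is an equivalence and both vertical functors are fully faithful, $F$ is fully faithful as well.

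I do not anticipate any substantial obstacle: all the heavy lifting is already contained in Theorem \ref{main-thm-semt} and Criterion \ref{Crit: restr-Gproj1}. The only mildly delicate point is the verification that ${}_BA\in{\rm Gproj}(B)^{\perp}$, but this is a routine consequence of $\pd({}_BA)<\infty$ together with the defining property of Gorenstein projective modules. One could alternatively cite \cite[Proposition 3.7]{OPS19} directly, but the argument above has the advantage of being self-contained within the framework of the present paper.
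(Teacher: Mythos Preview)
Your argument is correct. The paper gives no proof of its own for this proposition: it simply records it as an observation that ``follows from \cite[Theorem I (iii) or Proposition 3.7]{OPS19}''. Your write-up is, in effect, an explicit unfolding of that citation within the paper's own framework---indeed the commutative square you assemble is precisely the one already displayed at the end of the proof of Criterion~\ref{Crit: restr-Gproj1}, and your application of that criterion to $X={}_AA_B$ is exactly how the paper itself invokes it a few lines later in the proof of Theorem~\ref{main-thm-def1}.

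One small sharpening: you appeal to the full strength of Theorem~\ref{main-thm-semt} (an equivalence on singularity categories), but your square only needs full faithfulness of ${}_AA\otimes_B^{\mathbb{L}}-:\Sg(B)\to\Sg(A)$, which is already the second statement of Lemma~\ref{Lem: restriction to singularity} and requires only $\pd({}_BM_B)<\infty$ rather than the full bounded-extension hypothesis. This is consistent with the paper's choice to cite \cite[Proposition 3.7]{OPS19} directly rather than its own Theorem~\ref{main-thm-semt}.
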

\begin{Thm}\label{main-thm-def1}
Let $B\subset A$ be a bounded extension.
Suppose that   $\mathbb{R}\Hom_B(A,B)$ is quasi-isomorphic to a bounded complex with each term in ${\rm Gproj}(A)^{\bot}$. Then
$${}_AA\otimes_B-: \stgp{B}\ra  \stgp{A}\mbox{ and } {}_AA\otimes^{\mathbb{L}}_B-: \Gdef(B)\rightleftharpoons  \Gdef(A): \Res^A_B$$
are equivalences.
\end{Thm}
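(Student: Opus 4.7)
The plan is to construct restrictions of the adjoint singular equivalence from Theorem~\ref{main-thm-semt} to the stable categories of Gorenstein projective modules, and then pass to the Gorenstein defect categories via the exact sequence of triangulated categories $0\ra \stgp{\Lambda}\ra \Sg(\Lambda)\ra \Gdef(\Lambda)\ra 0$.

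First I would verify that ${}_AA\otimes_B-$ restricts to a triangle functor $F:\stgp{B}\to \stgp{A}$ via Criterion~\ref{Crit: restr-Gproj1}, applied with $\Lambda=B$, $\Gamma=A$ and $X={}_AA_B$. The module ${}_AA$ is projective, and from the short exact sequence of $B$-$B$-bimodules $0\to B\to A\to M\to 0$ together with $\pd({}_BM_B)<\infty$ one obtains $\pd({}_BA)<\infty$; this forces $\Ext^i_B(U,A)=0$ for all $U\in\gpmodcat{B}$ and all $i\ge 1$, since a Gorenstein projective module has vanishing higher Ext into any module of finite projective dimension. Thus $\Hom_A(A,A)={}_BA$ lies in ${\rm Gproj}(B)^{\perp}$, and the criterion applies. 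In the opposite direction, I would apply Criterion~\ref{Crit: restr-Gproj2} to $\Res^A_B={}_BA\otimes_A-$ with $\Lambda=A$, $\Gamma=B$ and $X={}_BA_A$: the two finite projective dimension conditions come from the bounded extension hypothesis, and the condition on $\mathbb{R}\Hom_B(A,B)$ is precisely the assumption of the theorem. This yields a triangle functor $\Res^A_B:\stgp{A}\to \stgp{B}$.

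Once both restrictions are in place, the equivalence on stable categories of Gorenstein projectives is essentially automatic. Theorem~\ref{main-thm-semt} provides that the unit $\eta:\Id\Rightarrow \Res^A_B F$ and the counit $\varepsilon:F\Res^A_B\Rightarrow \Id$ are isomorphisms in $\Sg(B)$ and $\Sg(A)$ respectively; since the canonical embeddings $\stgp{B}\hookrightarrow \Sg(B)$ and $\stgp{A}\hookrightarrow \Sg(A)$ are fully faithful, they reflect isomorphisms, so the restrictions of $\eta$ and $\varepsilon$ to $\stgp{B}$ and $\stgp{A}$ remain isomorphisms. Hence $F$ and $\Res^A_B$ are mutually inverse equivalences between $\stgp{B}$ and $\stgp{A}$.

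For the Gorenstein defect categories, I would apply the universal property of Verdier quotients to the commutative ladder of the exact sequences of triangulated categories $0\ra \stgp{\Lambda}\ra \Sg(\Lambda)\ra \Gdef(\Lambda)\ra 0$ (for $\Lambda=A$ and $\Lambda=B$), connected vertically by $F$ and by $\Res^A_B$; as the leftmost and middle vertical functors are equivalences (by the previous step and by Theorem~\ref{main-thm-semt}) and the functors preserve the respective subcategories, so are the induced functors $\Gdef(B)\rightleftharpoons \Gdef(A)$, and adjointness passes to the quotient. The only non-formal input was the construction of $\Res^A_B:\stgp{A}\to \stgp{B}$, which is where the assumption on $\mathbb{R}\Hom_B(A,B)$ intervenes through Criterion~\ref{Crit: restr-Gproj2}; I expect this verification to be the main (albeit mild) obstacle in the argument.
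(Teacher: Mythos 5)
Your proposal is correct and follows essentially the same route as the paper: Criterion~\ref{Crit: restr-Gproj1} (with $\Lambda=B$, $\Gamma=A$, $X={}_AA_B$) for one direction, Criterion~\ref{Crit: restr-Gproj2} (with $\Lambda=A$, $\Gamma=B$, $X={}_BA_A$) for the other, and then the adjoint equivalence on singularity categories from Theorem~\ref{main-thm-semt} forces the restricted functors and the induced functors on the Verdier quotients to be equivalences. Your explicit check that ${}_BA\in{\rm Gproj}(B)^{\perp}$ via $\pd({}_BA)<\infty$ is a detail the paper leaves implicit.
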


\begin{proof}
Since ${}_AA$ projective as left A-module and $\pd (_BA)$, $\pd(A_B)<\infty$, it follows from Criteria~\ref{Crit: restr-Gproj1} (with $\Lambda=B, \Gamma=A, {}_\Gamma X_\Lambda={}_AA_B$) that the functor ${}_AA\otimes_B-: \modcat{B} \rightarrow \modcat{A}$ induces a functor ${}_AA\otimes_B-: \stgp{B}\ra  \stgp{A}$ and thus ${}_AA\otimes^{\mathbb{L}}_B-:  \fGd{B}/\Kb{\pmodcat{B}}\ra  \fGd{A}/\Kb{\pmodcat{A}} $.
%%be proved by applying ${}_AA\otimes_B-$ to the complete projective resolution of $V$ and doing dimension shifting.
%Then by Lemma \ref{lem-tensor}, ${}_AA\otimes_B^\mathbb{L}V\cong {}_AA\otimes_BV \in  \Db{\modcat{A}}$,
%and then the functor ${}_AA\otimes_B^\mathbb{L}-$ induces a triangle functor from $\fGd{B}$ to $\fGd{A}$. We have thus a commutative diagram
%$$\xymatrix{\stgp{B}\ar[r]^{{}_AA\otimes_B-}\ar[d]^{\simeq}& \stgp{A}\ar[d]^{\simeq}\\
%\fGd{B}/\Kb{\pmodcat{B}}\ar[d]\ar[r]^{{}_AA\otimes_B^\mathbb{L}-} & \ar[d]\fGd{A}/\Kb{\pmodcat{A}}\\
%\Sg(B)\ar[r]^{{}_AA\otimes_B^\mathbb{L}-} &\Sg(A) }$$

Since $A_A$ is projective,  $\pd(_BA)<\infty$ and    $\mathbb{R}\Hom_B(A,B)$ is quasi-isomorphic to a bounded complex with each term in ${\rm Gproj}(A)^{\bot}$, it follows from Criteria~\ref{Crit: restr-Gproj2} (with $\Lambda=A, \Gamma=B, {}_\Gamma X_\Lambda={}_BA_A$) that the functor   $\Res^A_B={}_BA\otimes^{\mathbb{L}}_A-$  restricts to  a triangle functor from $\fGd{A}$ to $ \fGd{B}$. Then we obtain an induced  commutative diagram:
$$\xymatrix{\stgp{B}\ar[r]^{{}_AA\otimes_B- }\ar[d]^{\simeq}& \stgp{A}\ar[d]^{\simeq}\\
\fGd{B}/\Kb{\pmodcat{B}}\ar@{>->}[d]\ar@<1ex>[r]^{ {}_AA\otimes^{\mathbb{L}}_B- } &\ar@<1ex>[l]^{\Res^A_B} \ar@{>->}[d]\fGd{A}/\Kb{\pmodcat{A}}\\
\Sg(B) \ar@<1ex>[r]^{ {}_AA\otimes^{\mathbb{L}}_B- } \ar@{->>}[d]&\Sg(A) \ar@{->>}[d] \ar@<1ex>[l]^{\Res^A_B} \\
 \Gdef(B)\ar@<1ex>[r]^{ {}_AA\otimes^{\mathbb{L}}_B- } &  \ar@<1ex>[l]^{\Res^A_B}  \Gdef(A).}$$
By Theorem~\ref{main-thm-semt}, $${_A}A\otimes^{\mathbb{L}}_{B}-: \Sg(B)\rightleftharpoons\Sg(A): \Res^A_B$$ form an adjoint equivalence which implies that all the horizontal functors are equivalences.
%The equivalences could be also shown by using \cite[Theorem 5.3(2)]{HP17}. In fact, since $A_B$ has finite projective dimension,
%${_A}A\otimes^{\mathbb{L}}_{B}-: \Db{B}\to \Db{\modcat{A}} $ is uniformly bounded in the sense of \cite[Definition 4.1]{HP17}.
\end{proof}

%\begin{Remark} %Assume that
%%   $\mathbb{R}\Hom_B(A,B)$ is perfect as complex of $A$-modules. Since $A$ and hence $A\mbox{-}{\rm proj}$ belong to ${\rm Gproj}(A)^{\bot}$,
%Theorems~\ref{main-thm-semt} and \ref{main-thm-def1} imply that
%$${_A}A\otimes^{\mathbb{L}}_{B}-: \Sg(B)\rightleftharpoons\Sg(A):  \Res^A_B $$ is an adjoint equivalence which induces a singular equivalence of Morita type with level between $A$ and $B$  and that  ${}_AA\otimes_B-: \stgp{B}\ra  \stgp{A} $ is a triangle equivalence.
%Therefore, our results  strengthen   \cite[Theorem I (iii)]{OPS19}.
%
%\end{Remark}

Now we will give the last criteria from a different point of view.
\begin{Crit}\label{Crit: restr-Gproj3}
Assume there is an integer $t$ such that $\Omega ^t({_{\Gamma}X}\otimes_\Lambda U)\in {}^{\perp} {\Gamma}$ for any $U\in {}^{\perp}\Lambda$. Then
$X\otimes^{\mathbb{L}}_\Lambda-$ induces a triangle functor from $\fGd{\Lambda}$ to $\fGd{{\Gamma}}$.
\end{Crit}
\begin{proof} This can be proved similarly as \cite[Lemma 3.8]{QS23}. For the convenience of the reader, we
include here a proof.

As in the proof of Criterion~\ref{Crit: restr-Gproj2}, take a projective resolution $\cpx{P}$ of $X$ as $\Gamma\otimes_k \Lambda^{\op}$-module:
$$\cdots\ra P^{i}\raf{d^i} P^{i+1}\raf{d^{i+1}} \cdots \ra P^{-1}\raf{d^{-1}} P^0\ra {_\Gamma X_\Lambda}\ra 0.$$
Denote $N:=\Omega^l(_\Gamma X_\Lambda)$ for $l>>0$. Then $N$ is projective as left $\Gamma$-module and right $\Lambda$-module.
For any $U\in {\rm Gproj}(\Lambda)$,  $\cpx{P}\otimes_\Lambda U$ is exact, so the sequence
 $$0\ra {}_\Gamma N\otimes_\Lambda U\ra  {}_\Gamma P^{-l+1}\otimes_A U\ra \cdots \ra {}_\Gamma P^{-1}\otimes_\Lambda U\ra {}_\Gamma P^0\otimes_\Lambda U \ra {}_\Gamma X\otimes_\Lambda U\ra 0 $$
 is exact. Hence,  ${}_\Gamma N\otimes_\Lambda U\cong \Omega^l({}_\Gamma X\otimes_\Lambda U)\oplus {}_\Gamma V$ with ${}_\Gamma V\in \pmodcat{{\Gamma}}$, and
$ {}_\Gamma X\otimes^{\mathbb{L}}_\Lambda U  \cong  {_\Gamma X}\otimes_\Lambda U \in \Db{\modcat{\Gamma}}$,  which is   quasi-isomorphic to
$$0\ra {}_\Gamma N\otimes_\Lambda U\ra  {}_\Gamma P^{-l+1}\otimes_A U\ra \cdots \ra {}_\Gamma P^{-1}\otimes_\Lambda U\ra {}_\Gamma P^0\otimes_\Lambda U \ra  0. $$
To show that this complex falls into $\fGd{{\Gamma}}$, since ${}_\Gamma P^j\otimes_\Lambda U$ is projective for $j=-l+1, \dots, 0$, it  remains to prove that  when $l>t$,    ${}_\Gamma N\otimes_\Lambda U$ belongs to $ \gpmodcat{\Gamma}$.
%This can be proved similarly as \cite[Lemma 3.8]{QS23}. For the convenience of the reader, we include here a proof.

First, since ${}^{\perp} {\Gamma}$ is closed under taking syzygies  and  $\Omega ^t({}_\Gamma X\otimes_\Lambda U)\in {}^{\perp} {\Gamma}$, when $l>t$, ${}_\Gamma N\otimes_\Lambda U\cong \Omega^l({}_\Gamma X\otimes_\Lambda U)\oplus {}_\Gamma V$ with ${}_\Gamma V\in \pmodcat{{\Gamma}}$  will  belong  to  ${}^{\perp} {\Gamma}$.

Decompose  a complete $\Lambda$-projective resolution  $(\cpx{Q},\partial^\bullet)$ of $U$    with $U=\ker(\partial^0)$ into short exact sequence
$0\ra U^i\ra Q^{i}\ra U^{i+1}\ra 0$ for all $i\in \mathbb{Z}$ with $U^i:=\ker(\partial^i)$.
Since $N_\Lambda$ is projective, we have exact sequences:
$$0\ra {}_\Gamma N\otimes_\Lambda U^i\ra {}_\Gamma N\otimes_\Lambda Q^{i}\ra {}_\Gamma N\otimes_\Lambda U^{i+1}\ra 0.$$
%By Horseshoe Lemma, these lead to exact sequences
%$$0\ra \Omega^t(N\otimes_\Lambda U^i)\ra \Omega^t( N\otimes_\Lambda Q^{i})\oplus V^i\ra \Omega^t(N\otimes_\Lambda U^{i+1})\ra 0,$$
%with $V^i\in \pmodcat{{\Gamma}}$. Note that $W^i:=\Omega^t( N\otimes_\Lambda Q^{i})\oplus V^i\in \pmodcat{{\Gamma}}$.
Since all $U^i$ are Gorenstein projective, ${}_\Gamma N\otimes_\Lambda U^i \in {}^{\perp} {\Gamma}$. As all ${}_\Gamma N\otimes_\Lambda Q^{i}$ are projective,   we can form a complete resolution of  $N\otimes_\Lambda U$, hence
$N\otimes_\Lambda U \in \gpmodcat{\Gamma}$.

%By assumption, $\Omega^t(X\otimes_\Lambda U^i)\in {}^{\perp}{\Gamma}$ and $U^i$ is Gorenstein projective $\Lambda$-module, then there is an exact sequence
% $$0\ra N\otimes_\Lambda U^i\ra Q^{-l+1}\otimes_\Lambda U^i\ra \cdots \ra Q^{-1}\otimes_\Lambda U^i\ra Q^0\otimes_\Lambda U^i\ra X\otimes_\Lambda U^i \ra0.$$
% Thus we have $\Omega^t(N\otimes_\Lambda U^i)\in {}^{\perp}{\Gamma}$.
% And since $\Omega^t(N\otimes_\Lambda Q^{i})\in \pmodcat{{\Gamma}}$ for all $i\in\mathbb{Z}$. Then $\Omega^t(N\otimes_\Lambda U)\in {\Gamma}\gpmodcat$ and $N\otimes_\Lambda U\in \fGd{\Lambda}$. Thus $X\otimes_\Lambda U\in \fGd{\Lambda}$.
This completes the proof.
\end{proof}

%Now we will prove the second part of Theorem~\ref{main-thm} presented in the
%introduction.
%\begin{Thm}\label{main-thm-def}
%Let $B\subset A$ be a bounded extension.
%Suppose that either $\mathbb{R}\Hom_B(A,B)$ is quasi-isomorphic to a bounded complex with each term in ${\rm Gproj}(A)^{\bot}$
%or the extension $B\subset A$ is split. Then
%$${}_AA\otimes_B-: \stgp{B}\ra  \stgp{A}\mbox{ and }  _AA\otimes^{\mathbb{L}}_B-: \Gdef(B)\rightleftharpoons  \Gdef(A): \Res^A_B$$
%are equivalences.
%\end{Thm}
\begin{Thm}\label{main-thm-def2}
Let $B\subset A$ be a bounded extension.
Suppose that  $_BB$ is a direct summand of $_BA$. Then
$${}_AA\otimes_B-: \stgp{B}\ra  \stgp{A}\mbox{ and }  _AA\otimes^{\mathbb{L}}_B-: \Gdef(B)\rightleftharpoons  \Gdef(A): \Res^A_B$$
are equivalences. In particular, these two equivalences exist if the extension $B\stackrel{i}{\hookrightarrow} A$  splits (i.e. there is an algebra homomorphism $p:A\to B$ such that $p\circ i=\Id_B$) or the left $B$-module $_BA/B$ is projective.
\end{Thm}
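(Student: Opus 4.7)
The plan is to mirror the architecture of the proof of Theorem~\ref{main-thm-def1}, but to substitute Criterion~\ref{Crit: restr-Gproj3} for Criterion~\ref{Crit: restr-Gproj2} when handling the restriction functor, since the present hypothesis ${}_BB\,|\,{}_BA$ controls $B$-syzygies rather than $\mathbb{R}\Hom_B(A,B)$. I would establish (i) that ${}_AA\otimes_B-$ induces a triangle functor $\stgp{B}\to\stgp{A}$ via Criterion~\ref{Crit: restr-Gproj1}, and (ii) that $\Res^A_B$ induces a triangle functor $\fGd{A}\to\fGd{B}$ via Criterion~\ref{Crit: restr-Gproj3}. Combining these with the adjoint singular equivalence of Theorem~\ref{main-thm-semt}, through the same commutative diagram of quotient functors between $\stgp$, $\fGd/\Kb{\proj}$, $\Sg$ and $\Gdef$ used in the proof of Theorem~\ref{main-thm-def1}, the three horizontal rows will collapse to equivalences.

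For step (i), I would apply Criterion~\ref{Crit: restr-Gproj1} with $\Gamma=A$, $\Lambda=B$, $X={}_AA_B$. The left module ${}_AA$ is projective, and $\Hom_A(A,A)=A$ viewed as a left $B$-module is just ${}_BA$. The short exact sequence $0\to B\to A\to A/B\to 0$ of left $B$-modules together with $\pd({}_B(A/B))\leq \pd({}_B(A/B)_B)<\infty$ forces $\pd({}_BA)<\infty$; and modules of finite projective dimension automatically lie in $\gpmodcat{B}^{\perp}$ by a standard dimension shift against a complete resolution.

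For step (ii), I would apply Criterion~\ref{Crit: restr-Gproj3} with $\Gamma=B$, $\Lambda=A$, $X={}_BA_A$; the finite-projective-dimension hypotheses on ${}_BX$ and $X_A$ are clear. The main technical point---and the step I expect to be the principal obstacle---is producing an integer $t$ with $\Omega_B^t(\Res^A_B U)\in {}^{\perp}B$ for every $U\in{}^{\perp}A$. For this I would invoke Theorem~\ref{main-thm-semt2}, which supplies an integer $t$ such that
\[
\Ext_A^i(U,A)\;\cong\;\Ext_B^i(\Res^A_B U,\Res^A_B A)\;=\;\Ext_B^i(U,{}_BA)\qquad\text{for all } i>t.
\]
Since $U\in{}^{\perp}A$, the left-hand side vanishes for every $i\geq 1$, so $\Ext_B^i(U,{}_BA)=0$ for all $i>t$. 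The hypothesis that ${}_BB$ is a direct summand of ${}_BA$ yields a decomposition ${}_BA\cong {}_BB\oplus {}_BM$ in $\modcat{B}$ (with $M=A/B$); consequently $\Ext_B^i(U,B)$ is a direct summand of $\Ext_B^i(U,{}_BA)$ and hence also vanishes for $i>t$. A standard dimension shift then gives $\Omega_B^t(\Res^A_B U)\in{}^{\perp}B$, verifying the hypothesis of Criterion~\ref{Crit: restr-Gproj3}.

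The ``In particular'' clause will be immediate: if the extension $B\subset A$ splits (as $B$-bimodules, hence a fortiori as left $B$-modules), or if ${}_B(A/B)$ is projective so that $0\to B\to A\to A/B\to 0$ splits in $\modcat{B}$, then in either case ${}_BB$ is a direct summand of ${}_BA$ and the preceding argument applies verbatim.
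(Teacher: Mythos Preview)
Your proposal is correct and follows essentially the same route as the paper's proof: step (i) via Criterion~\ref{Crit: restr-Gproj1} for ${}_AA\otimes_B-$ (exactly as in the proof of Theorem~\ref{main-thm-def1}), step (ii) via Criterion~\ref{Crit: restr-Gproj3} for $\Res^A_B$ using Theorem~\ref{main-thm-semt2} and the direct-summand hypothesis to force $\Ext_B^i(\Res^A_B U,B)=0$ for $i>t$, and the concluding commutative diagram of quotients. The ``In particular'' clause is handled identically.
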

\begin{proof}

  Note that $\Res^A_B={_BA}\otimes_A-$ is exact and
$\pd(_BA)=\pd(_B\Res^A_B(A))< \infty$.
Now we claim that there is an integer $t$ such that $\Omega ^t(\Res^A_B(U))\in {}^{\perp} B$ for any $U\in {}^{\perp}A$,
and then Criterion~\ref{Crit: restr-Gproj3} can be applied. The equivalences can be shown as in the last paragraph of  the proof of Theorem~\ref{main-thm-def1}.

Let us show the claim that  there is an integer $t$ such that $\Omega ^t(\Res^A_B(U))\in {}^{\perp} B$ for any $U\in {}^{\perp}A$.

%For the sake of simplicity, we denote by $G=\Res^A_B:B\modcat \rightarrow \modcat{A}$, which extends to a
%triangle functor $G:\D{B\Uodcat}\rightarrow \D{A}$. Denote by $F={}_AA\otimes_B^{\mathbb{L}}-:\D{A}\rightarrow \D{B}$.
%Then $(F,G)$ is an adjoint pair between the corresponding derived categories.
Since  $_BB$ is a direct summand of $_BA$, then
for any $U\in {}^{\perp}A$ and $i>0$, $\Ext _B^i(  \Res^A_B(U), B)$
is a direct summand of $\Ext _B^i(  \Res^A_B(U), _BA)$. On the other hand,
it follows from Theorem~\ref{main-thm-semt2} that the functor $\Res^A_B$ is a $t$-eventually homological isomorphism
for some integer $t$. Therefore, for any $i>t$,
we have isomorphisms $$\Ext _B^i(  \Res^A_B(U), _BA)=\Ext _B^i(  \Res^A_B(U), \Res^A_B(A))
\cong \Ext _A^i(U, A),$$ which is equal to zero since $U\in {}^{\perp}A$.
Hence, for any $i>t$ we have $\Ext _B^i(  \Res^A_B(U), B)=0$,  and for any $j>0$,
 $$\Ext _B^{j}(  \Omega^t(\Res^A_B(U)), B)=\Ext _B^{j+t}(  \Res^A_B(U), B)=0.$$
This shows that $\Omega^t(\Res^A_B(U))\in {}^{\perp} B$.
\end{proof}

\section{Homological conjectures for bounded extensions}\label{Sect:Conjecture}

In this section, we use the results of Sections~\ref{semt} and \ref{Sect: Gorenstein}  to deal with several homological conjectures for bounded extensions, which are still open. They are stated as follows.

{\bf The finitistic dimension conjecture \cite{Bass60}}: \emph{The finitistic dimension of $A$ which is the supremum of the projective dimensions of finitely generated $A$-modules with finite projective dimension, is finite.}

{\bf Han's conjecture \cite{Han06}}: \emph{If the Hochschild homology groups $\HH_n(A)$ vanish for all sufficiently large $n$, then its global dimension is finite.}

{\bf Keller's conjecture \cite{Keller}}: \emph{Let ${\bf S}_{\dg}(A)$ be the dg enhancement of the singularity category $\Sg(A)$. Then there is an isomorphism in the homotopy category of $B_\infty$-algebras
$$\C^*_{\sg}(A, A)\cong \C^*({\bf S}_{\dg}(A), {\bf S}_{\dg}(A)),$$
where $\C^*_{\sg}(A,A)$ is the singular Hochschild cohomology complex \cite{Wang21} of $A$, and $\C^*({\bf S}_{\dg}(A), {\bf S}_{\dg}(A))$ is the Hochschild cochain complex of ${\bf S}_{\dg}(A)$.
}

{\bf Auslander and Reiten's Conjecture \cite{AR75}}: \emph{A finitely generated $A$-module $X$ satisfying $\Ext_A^i(X, X\oplus A)=0$ for all $i\ge 1$ must be projective.}

From \cite{SS04, EHSST04}, an algebra $A$ is said to satisfy {\bf the Fg condition} if the
Hochschild cohomology ring $HH^*(A)$ is Noetherian
and the Yoneda algebra $\Ext _A^*(A/ \rad A, A/ \rad A)$ is a finitely generated
$HH^*(A)$-module. The Fg condition enables developing support variety theory via Hochschild cohomology \cite{SS04, EHSST04}.

Note that for  Han's conjecture  the following result  was firstly proved by  Cibils, Lanzilotta, Marcos, and Solotar for left or right bounded extensions  in \cite{CLMS22} generalising their previous results in \cite{CLMS20b}.
\begin{Thm}\label{main-cor-semt-1}
Let $B\subset A$ be a bounded extension. Then the following statements hold:
\begin{itemize}
\item[$(1)$] $B$ satisfies the finitistic dimension conjecture if and only if so does $A$.

\item[$(2)$]  $B$ satisfies Han's conjecture if and only if so does $A$.

\item[$(3)$] $B$ satisfies Keller's conjecture if and only if so does $A$.
\end{itemize}
\end{Thm}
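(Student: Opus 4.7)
The plan is to combine two structural results from the preceding sections with invariance theorems already in the literature. The common backbone for all three parts is the singular equivalence of Morita type with level from Theorem~\ref{main-thm-semt}, supplemented in one case by the eventually homological isomorphism from Theorem~\ref{main-thm-semt2}.

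Parts (1) and (3) reduce formally to existing invariance statements. For (1), the finitistic dimension conjecture is invariant under singular equivalences of Morita type with level by \cite{Wang15}, and plugging in Theorem~\ref{main-thm-semt} gives the conclusion at once; alternatively, one may apply the reduction theorem of Psaroudakis-Skartsaeterhagen-Solberg \cite{PSS14} directly to the exact eventually homological isomorphism $\Res^A_B$ supplied by Theorem~\ref{main-thm-semt2}. For (3), the invariance of Keller's conjecture under singular equivalences of Morita type with level was established in \cite{CLW20} by transporting both the singular Hochschild cochain complex $\C^*_{\sg}(-,-)$ and the dg enhancement ${\bf S}_{\dg}(-)$ of the singularity category along such equivalences, compatibly with their $B_\infty$-structures; Theorem~\ref{main-thm-semt} again delivers the required equivalence.

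For (2), I would split the equivalence of Han's conjecture for $A$ and $B$ into two halves. First, since an algebra has finite global dimension if and only if its singularity category vanishes, the triangle equivalence $\Sg(A)\simeq \Sg(B)$ from Theorem~\ref{main-thm-semt} immediately yields $\mathrm{gldim}(A)<\infty$ if and only if $\mathrm{gldim}(B)<\infty$. Second, I would establish that $\HH_n(A)\cong \HH_n(B)$ for all sufficiently large $n$ by setting up a Jacobi-Zariski style long exact sequence tailored to our bounded extension, in the spirit of \cite{CLMS22}. Such a sequence has terms built from iterated tensor powers of $A/B$ over $B$; by the tensor nilpotency $(A/B)^{\otimes_B p}=0$, the finite bimodule projective dimension of $A/B$, and the $\Tor$-vanishing from Definition~\ref{def-bound-exten}(3), the arguments of Lemmas~\ref{lem-tensor} and \ref{lem-ring-extension-tor} collapse the derived tensor products to ordinary ones supported in bounded homological degrees, so only finitely many Hochschild degrees are affected. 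Combining the two halves then gives Han's conjecture for $A$ if and only if for $B$.

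The principal obstacle will be part (2): adapting the Jacobi-Zariski long exact sequence of \cite{CLMS22} to our more permissive notion of bounded extension, where $A/B$ is no longer assumed projective as a one-sided $B$-module. In \cite{CLMS22} the one-sided projectivity guaranteed that certain derived tensor products reduce to ordinary ones; in our setting this role is played by the $\Tor$-vanishing condition of Definition~\ref{def-bound-exten}(3), and the work lies in checking that this substitution is strong enough to make the relevant long exact sequence degenerate above some fixed degree. Parts (1) and (3) require essentially no new work beyond appealing to the invariance results already in the literature.
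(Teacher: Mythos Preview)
Your treatment of parts (1) and (3) matches the paper's: both are immediate from Theorem~\ref{main-thm-semt} together with the invariance results of \cite{Wang15} and \cite{CLW20}, respectively.

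For part (2), however, you take a substantially longer detour than the paper. You propose to adapt the Jacobi--Zariski long nearly exact sequence of \cite{CLMS22} to the present, weaker notion of bounded extension, and you correctly flag this as the principal obstacle. But the paper avoids this machinery entirely. The point is that singular equivalences of Morita type with level already preserve Hochschild homology, again by \cite{Wang15}; combined with the observation (which you also make) that the induced equivalence $\Sg(A)\simeq\Sg(B)$ forces $\mathrm{gldim}(A)<\infty$ if and only if $\mathrm{gldim}(B)<\infty$, Han's conjecture transfers between $A$ and $B$ without any further work. The paper even remarks explicitly, just after the proof, that its approach to (2) ``differs entirely'' from \cite{CLMS22} precisely because it bypasses the Jacobi--Zariski sequence.

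So your proposal is not wrong in spirit---if the Jacobi--Zariski argument can indeed be pushed through under the $\Tor$-vanishing hypothesis, it would give an independent proof---but it misses the payoff of having established Theorem~\ref{main-thm-semt} in the first place: once you know $A$ and $B$ are singularly equivalent of Morita type with level, all three parts are one-line consequences of known invariance theorems, and no adaptation of \cite{CLMS22} is needed.
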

\begin{proof} By Theorem \ref{main-thm-semt}, we know that $A$ and $B$ are singularly equivalent of Morita type with level,  which induces an equivalence of singularity categories, thus  the finiteness of the global dimension is preserved.
Note that this equivalence
preserves  the finitistic dimension conjecture \cite{Wang15},  Hochschild homology \cite{Wang15} and Keller's conjecture \cite{CLW20}. Thus the statements
(1)-(3) follow.
\end{proof}

\begin{Remark}
%The assumption in Theorem \ref{main-cor-semt-1} regarding the bounded extension is less restrictive than that imposed on the left or %right bounded extension as introduced in \cite{CLMS22}. Consequently,
\begin{itemize}
\item[$(1)$]
Theorem \ref{main-cor-semt-1}(2) extends the main result presented in \cite{CLMS22} from left or right bounded extensions to bounded extensions. Additionally, the approach employed to establish Theorem \ref{main-cor-semt-1}(2) differs from that adopted in \cite{CLMS22}, where the Jacobi-Zariski long nearly exact sequence is utilized \cite{CLMS21b}.
\item[$(2)$] More results  about  Han's conjecture for Morita context algebras are obtained in \cite{CRS21} and \cite{CLMS24}.
In a forthcoming paper we will study Han's conjecture from the viewpoint of recollements and ladders of derived categories  with applications to  Morita context algebras in mind.
\item[$(3)$] In 2021, Iusenko and MacQuarrie introduced a different generalization of left or right bounded extensions, termed the strongly proj-bounded extensions (see \cite{IM21}). They showed that, for such extensions, the finiteness of the global dimension, the finiteness of the big finitistic dimension, and the support of the Hochschild homology are preserved, thus generalizing the results of Cibils, Lanzillota, Marcos, and Solotar in \cite{CLMS22}. Furthermore, MacQuarrie and Naves showed that if an extension satisfies only Condition (2) of the bounded extension as defined in Definition \ref{def-bound-exten}, then the finiteness of the finitistic dimension of the larger algebra implies that of the smaller algebra (see \cite{MN23}).
 \end{itemize}
\end{Remark}

\begin{Thm}\label{main-cor-semt-2}
Let $B\subset A$ be a bounded extension. Assume that ${}_AA^{\tr_{B}}=\mathbb{R}\Hom_{B}({}_BA_A,{}_BB)$ is perfect as left $A$-module
or that $_BB$ is a direct summand of $_BA$.
Then the following statements hold:
\begin{itemize}
\item[$(1)$] $B$ satisfies Auslander-Reiten's conjecture if and only if so does $A$.

\item[$(2)$] $B$ satisfies the Fg condition
if and only if so does $A$. %In this case, they have the same support variety theory.

\end{itemize}
\end{Thm}

\begin{proof}

(1) By the exact sequence $0\rightarrow B \rightarrow A \rightarrow M \rightarrow 0$, since the projective dimension of $M$ as $B$-$B$-bimodule is finite, $\pd(_BA)$ and $\pd(A_B)$ are finite.
%Then
%$\mathbb{R}\Hom_{B}(\mathbb{R}\Hom_{B^{\op}}(A,B),B)\cong {_AA_B}$
%$\mathbb{R}\Hom_{B}(A,-)\cong\mathbb{R}\Hom_{B}(A,B)\otimes_B^{\mathbb{L}}-$
%
Then we have the following  adjoint 5-tuple of  triangle functors
\begin{align*}
\xymatrixcolsep{5pc}\xymatrix{\mathrm{D}(B)
\ar@<1.5pc>[r]|{{}_AA\otimes^{\mathbb{L}}_{B}-}
\ar@<-1.5pc>[r]|{\mathbb{R}\Hom_{B}(_BA_A,-)}
&\mathrm{D}(A)  \ar[l]|{\Res^A_B={}_BA\otimes_{A}-}
\ar@<3pc>[l]^{ \mathbb{R}\Hom_{A}(_A(A^{\tr_{B}})_B,-)}
\ar@<-3pc>[l]_{_BA^{\tr_{B^\op}}\otimes^{\mathbb{L}}_{A}-}
}
\end{align*}
where $A^{\tr_{B^\op}}=\mathbb{R}\Hom_{B^\op}(_AA_B, B)$,
and $(_AA\otimes ^L_B-,_BA\otimes _A-)$ induces mutually inverse
singular equivalences between $A$ and $B$. %where $\Modcat{A}$ stands for the category of all left $A$-modules and $\mathrm{D}(A)$ stands for the unbounded derived categories of complexes over $\Modcat{A}$.
Since $_BA\otimes _A-$ preserves $\Kb{\proj}$ and coproducts,
we get that $_BA\otimes _A-$ preserves $\Kb{\Proj}$.
Then it follows from \cite[Corollary 3.5]{CHQW23} that
Auslander-Reiten's conjecture holds for $B$ if it holds for $A$.
Now we will prove the converse statement
under the following two cases.

{\it Case 1.} Assume $A^{\tr_{B}}=\mathbb{R}\Hom_{B}({}_BA_A,{}_BB)$ is perfect as left $A$-module. Then the functor $\mathbb{R}\Hom_{B}({}_BA_A,-)\cong {}_AA^{\tr_{B}}\otimes_B^{\mathbb{L}}-$ preserves perfect complexes and the diagram extends downwards one step further. Thus we complete the proof by \cite[Corollary 3.7]{CHQW23}.

{\it Case 2.} Assume the extension $B\subset A$   splits. Then
by the proof of Theorem~\ref{main-thm-def2},
there is an integer $t$ such that $\Omega ^t(\Res^A_B(U))\in {}^{\perp} B$ for any $U\in {}^{\perp}A$.
Therefore, there is a commutative diagram
$$\xymatrix@!=1pc{ \underline{^\bot A} \ar @{^{(}->}[d]
\ar[rr]^{\Omega ^t \circ \Res^A_B} &&\underline{^\bot B} \ar @{^{(}->}[d] \\
\Sg(A)
\ar[rr]^{[-t]\circ \Res^A_B} &&\Sg(B)
}$$ where $[1]$ is the shift functor on singularity category,
$\underline{{}^{\perp}A}$ is the additive quotient category of ${}^{\perp}A$ modulo projective modules
and the vertical maps are natural full embedding, see \cite[Lemma 2.1]{CHQW23} for example.
Since $[-t]\circ \Res^A_B: \Sg(A)\rightarrow \Sg(B)$ is an equivalence,  it follows from \cite[Lemma 3.3]{CHQW23}
that Auslander-Reiten's conjecture holds for $A$ if it holds for $B$.

(2) Since ${\rm proj}(A)$  is contained in ${\rm Gproj}(A)^{\bot}$, it follows from
Theorem~\ref{main-thm-def1} and Theorem~\ref{main-thm-def2} that $\Gdef(B) \simeq  \Gdef(A)$.
Therefore, $B$ is Gorenstein if and only if so does $A$.
 Assume that $B$ satisfies the Fg condition. Then it follows from \cite[Theorem 1.5 (a)]{EHSST04}
that $B$ is a Gorenstein algebra, and so does $A$. On the other hand,
it follows from Theorem~\ref{main-thm-semt} that there is a singular equivalence of Morita type with level between
$A$ and $B$. Therefore, $A$ satisfies the Fg condition by \cite[Theorem 7.4]{Ska16},
and similarly, if $A$ satisfies the Fg condition then so does $B$.
%In this case, they have the same support variety theory by \cite{Chen18}.
\end{proof}

\section{Examples}\label{Sect:Examples}
In this section, we apply Theorems~\ref{main-thm-semt}, \ref{main-thm-semt2}, \ref{main-thm-def1},  \ref{main-thm-def2}, \ref{main-cor-semt-1} and \ref{main-cor-semt-2}  to many examples, and we get several reduction methods on singularity categories and Gorenstein defect categories.

Let $B\subset A$ be an algebra extension which splits. We will describe the construction of split algebra extensions. Let $i: B\to A$ be the algebra inclusion and $q: A\to B$   be the retraction. Denote $M=\Ker(q)$ and $j: M\to A$ be the inclusion.  Then there is a linear map $p: A\to M$ such that $j\circ p+i\circ q=\Id_A$, that is, we have the splitting:
$$\xymatrix{0 \ar[r]  &  B\ar@<1ex>[r]^i & A \ar@<1ex>[l]^q \ar@<1ex>[r]^p &  M\ar@<1ex>[l]^j \ar[r]&  0}$$
and we have the vector space isomorphism $A\cong B\oplus M$.
Since $q: A\to B$ is an algebra homomorphism, $M$ is a $B$-$B$-bimodule. The multiplication on $A$ induces, via the vector space isomorphism $A\cong B\oplus M$,  a multiplication on $M$, $\cdot: M\otimes_B M\to M$ which is also a $B$-$B$-bimodule homomorphism.
In this case, the multiplication on $B\oplus M$ will be given by
$$(b, x)(b', x')=(bb', bx'+xb'+x\cdot x').$$
This motivated the following definition as well as a complete description of split algebra extensions.
\begin{Def}[{\cite[Definition 2.3]{BGN12}}] Let $B$ be algebra and $M$ a   $B$-$B$-bimodule. The bimodule ${}_BM_B$ is called a bimodule algebra if there is a $B$-$B$-bimodule homomorphism
$$\cdot: M\otimes_B M\to M$$
subject to the following properties:
  $$(x\cdot y)\cdot z= x\cdot (y\cdot z), x, y, z\in M.$$

\end{Def}

\begin{Prop}[{\cite[Proposition 2.4]{BGN12}}] Let $B$ be algebra and $M$ a   $B$-$B$-bimodule algebra with product $\cdot: M\otimes_B M\to M$. Then the vector space
$A=B\oplus M$ has an algebra structure given by
$$(b, x)(b', x')=(bb', bx'+xb'+x\cdot x'), b, b'\in B, x, x'\in M.$$
Moreover, the inclusion $B\to A, b\mapsto (b, 0)$ is an algebra extension which splits via $q: A\to B, (b, x)\mapsto b$.

Conversely, each split algebra extension $B\subset A$ is given as above.
\end{Prop}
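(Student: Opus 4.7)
The plan is to verify the two directions separately. For the forward direction, I would first check that $(1_B, 0)$ is a two-sided unit for the proposed multiplication on $A = B \oplus M$, which is immediate from the unitality of $B$ and from $1_B$ acting as identity on the bimodule $M$. The main computation is associativity of the product: expanding $((b,x)(b',x'))(b'',x'')$ and $(b,x)((b',x')(b'',x''))$ componentwise, the $B$-component agrees by associativity in $B$, while matching the $M$-components requires combining three ingredients, namely (i) associativity of the bimodule algebra product $\cdot$, (ii) the $B$-balanced condition $(xb')\cdot y = x\cdot(b'y)$ which expresses that $\cdot$ factors through $M\otimes_B M$, and (iii) the two-sided compatibility of the left and right $B$-actions on $M$ together with the fact that $\cdot$ is a $B$-$B$-bimodule homomorphism, i.e.\ $b(x\cdot y) = (bx)\cdot y$ and $(x\cdot y)b = x\cdot(yb)$. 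Once associativity is established, the maps $i\colon B\to A$, $b\mapsto (b,0)$ and $q\colon A\to B$, $(b,x)\mapsto b$ are easily seen to be algebra homomorphisms with $q\circ i = \Id_B$, exhibiting $B\subset A$ as a split extension.

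For the converse, I would start with a split extension $i\colon B\hookrightarrow A$ with retraction $q\colon A\to B$ and set $M := \Ker(q)$. Since $q$ is an algebra homomorphism, $M$ is a two-sided ideal of $A$, and $A = i(B) \oplus M$ as a $\bfk$-vector space. Restricting the $A$-multiplication along $i$ on each side equips $M$ with a $B$-$B$-bimodule structure, and since $M$ is an ideal, the multiplication of $A$ restricts to a $\bfk$-bilinear map $\mu\colon M\times M\to M$. The key observation is that $\mu$ is $B$-balanced: for $x, y\in M$ and $b\in B$, associativity in $A$ gives $(x\cdot i(b))\cdot y = x\cdot(i(b)\cdot y)$, which translates into $(xb)\cdot y = x\cdot(by)$ for the induced bimodule actions, so $\mu$ descends to $\cdot\colon M\otimes_B M \to M$. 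That $\cdot$ is a $B$-$B$-bimodule map and is associative then follows directly from the corresponding properties of multiplication in $A$, so $(M, \cdot)$ is a $B$-$B$-bimodule algebra.

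It remains to check that the two constructions are mutually inverse. Given $(B, M, \cdot)$, the recipe in the statement produces $A = B\oplus M$, and the splitting $i\colon B\to A$, $q\colon A\to B$ recovers $M = \Ker(q)$ together with the original product $\cdot$. Conversely, starting from a split extension and using the decomposition $A = i(B)\oplus M$, any product $(b + x)(b' + x')$ in $A$ expands to $bb' + bx' + xb' + xx'$, and since $xx' \in M$ (as $M$ is an ideal), this matches the formula in the statement once $xx'$ is identified with $x\cdot x'$. I do not expect any serious obstacle: the entire argument is bookkeeping around bimodule actions and ideals. The most delicate moment is the associativity check in the forward direction, where one must carefully keep track of when $b \in B$ acts on $M$ via the bimodule structure versus when two elements of $M$ are multiplied via $\cdot$, ensuring each of the nine cross-terms lands in the right summand of $B \oplus M$.
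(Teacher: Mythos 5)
Your proposal is correct and follows exactly the argument the paper sketches in the paragraph preceding the definition of bimodule algebra (the paper itself only outlines the converse construction and defers the verification to \cite[Proposition 2.4]{BGN12}). The associativity check correctly isolates the three needed ingredients --- associativity of $\cdot$, $B$-balancedness, and the bimodule-homomorphism property --- and the converse direction via $M=\Ker(q)$ being an ideal is the standard and intended one.
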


%We can restate Theorems~\ref{main-thm-semt} and \ref{main-thm-def2} for split algebra extensions.
\begin{Prop}\label{Main-for-split-extension} Let $B\subset A=B\oplus M$ be a  split algebra extension. Then it is a bounded extension if and only if $\pd( _{B}M_B)<\infty$, $M^{\otimes _B p}=0$ for some integer $p$ and
$\Tor _i^B(M, M^{\otimes _B j})=0$ for each $i,j\geq 1$.
Hence, Theorems~\ref{main-thm-semt}, \ref{main-thm-semt2},    \ref{main-thm-def2}, \ref{main-cor-semt-1} and \ref{main-cor-semt-2}  apply.
%In this case, there are triangle equivalences:
%$$ \Sg(B) \simeq \Sg(A),\    \stgp{B}\simeq \stgp{A},\   \mbox{and} \  \Gdef(B)\simeq \Gdef(A).$$
%Moreover, $A$ and $B$ are  singularly equivalent of Morita type with level.
\end{Prop}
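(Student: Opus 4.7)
The plan is to observe that for a split algebra extension $B \subset A = B \oplus M$, the quotient $A/B$ is canonically isomorphic to $M$ as a $B$-$B$-bimodule. Indeed, the retraction $q : A \to B$ is a $B$-$B$-bimodule map (since the inclusion $i : B \to A$ makes $A$ into a $B$-$B$-bimodule via $q$ as well), and its kernel is precisely $M$, yielding the short exact sequence $0 \to B \to A \to M \to 0$ of $B$-$B$-bimodules. This gives a bimodule isomorphism $A/B \cong M$.

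Under this identification, the three clauses of Definition~\ref{def-bound-exten} become, term by term, the three hypotheses in the statement: $(A/B)^{\otimes_B p} \cong M^{\otimes_B p}$ so clause (1) becomes $M^{\otimes_B p} = 0$; the projective dimension of $A/B$ as a $B$-$B$-bimodule equals $\pd({}_B M_B)$, so clause (2) becomes $\pd({}_B M_B) < \infty$; and $\Tor_i^B(A/B, (A/B)^{\otimes_B j}) \cong \Tor_i^B(M, M^{\otimes_B j})$, so clause (3) becomes the stated Tor-vanishing. This proves the claimed equivalence.

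For the final assertion, it suffices to verify the hypotheses of the relevant theorems. Theorems~\ref{main-thm-semt} and \ref{main-thm-semt2} require only that $B \subset A$ be bounded, which we have just shown. For Theorem~\ref{main-thm-def2}, the additional hypothesis is that ${}_B B$ is a direct summand of ${}_B A$; but the splitting provides the decomposition ${}_B A \cong {}_B B \oplus {}_B M$ as left $B$-modules, so this hypothesis holds automatically. There is essentially no obstacle here: the proof is a direct translation via the bimodule isomorphism $A/B \cong M$ and an appeal to the already established theorems.
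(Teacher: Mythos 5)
Your proposal is correct and takes essentially the same route as the paper, which leaves this proposition without an explicit proof precisely because it is the immediate translation of Definition~\ref{def-bound-exten} through the bimodule isomorphism $A/B\cong M$ coming from the splitting. Your additional check that ${}_BB$ is a direct summand of ${}_BA$ (so that Theorem~\ref{main-thm-def2} applies) is the right observation and is exactly what the paper relies on.
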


Let us specify Proposition~\ref{Main-for-split-extension} for trivial extensions and further to triangular matrix algebras.

 Let $B$ be a finite dimensional  algebra and $M$   a finite dimensional $B$-$B$-bimodule.
Let $A=B\ltimes M$ be the trivial extension of $A$ by $M$.
Obviously, the algebra inclusion $B\hookrightarrow A$ defined by $b\mapsto (b,0)$
has a retraction $A\rightarrow B$ given by $(b,m)\mapsto b$. Therefore, the extension
$B\subset  A$ splits with ${}_B(A/B)_B={}_BM_B$.
Trivial extensions of algebras are exactly split algebra extensions with trivial product on the bimodule algebras.
Following Proposition~\ref{Prop-bounded-exten}, we have the following result.
\begin{Prop}\label{prop-trivial-extension}
 The trivial extension $B\subset A=B\ltimes M$ is a bounded extension if and only if $\pd( _{B}M_B)<\infty$, $M^{\otimes _B p}=0$ for some integer $p$ and
$\Tor _i^B(M, M^{\otimes _B j})=0$ for each $i,j\geq 1$.
Hence, Theorems~\ref{main-thm-semt}, \ref{main-thm-semt2},    \ref{main-thm-def2}, \ref{main-cor-semt-1} and \ref{main-cor-semt-2}  apply.

%If this is the case,  there are triangle equivalences
%$$ \Sg(B) \simeq \Sg(A), \  \stgp{B}\simeq \stgp{A},\   \mbox{and} \   \Gdef(B)\simeq \Gdef(A).$$
%Moreover, $A$ and $B$ are singularly equivalent of Morita type with level.
\end{Prop}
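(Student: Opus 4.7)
The plan is to deduce this as a direct specialisation of Proposition~\ref{Main-for-split-extension}, after recognising that every trivial extension $A = B \ltimes M$ is itself a split algebra extension in the sense of the paragraph preceding that proposition, with the bimodule algebra structure on $M$ carrying the \emph{trivial} product.

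First I would record the split structure explicitly. The inclusion $i : B \hookrightarrow A$, $b \mapsto (b, 0)$, is an algebra homomorphism admitting the retraction $q : A \to B$, $(b, m) \mapsto b$, whose kernel is $\{(0, m) \mid m \in M\}$. This produces a $B$-$B$-bimodule decomposition $A \cong B \oplus M$, under which $A/B \cong M$. Inspecting the trivial-extension multiplication $(b, m)(b', m') = (bb', bm' + mb')$, one sees that the induced bimodule algebra product $\cdot : M \otimes_B M \to M$ is identically zero; in particular, trivial extensions are precisely those split algebra extensions whose bimodule algebra product vanishes.

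Second, via the identification $A/B \cong M$, the three conditions of Definition~\ref{def-bound-exten} translate verbatim into the three conditions on $M$ displayed in the statement: $B$-tensor nilpotence of $A/B$ becomes $M^{\otimes_B p} = 0$; finiteness of $\pd({}_B(A/B)_B)$ becomes $\pd({}_BM_B) < \infty$; and the $\Tor$-vanishing $\Tor_i^B(A/B, (A/B)^{\otimes_B j}) = 0$ becomes $\Tor_i^B(M, M^{\otimes_B j}) = 0$. Applying Proposition~\ref{Main-for-split-extension} then yields both the equivalence and the applicability of Theorems~\ref{main-thm-semt} and \ref{main-thm-semt2}. For Theorem~\ref{main-thm-def2} one additionally needs ${}_BB$ to be a direct summand of ${}_BA$, which is automatic from the decomposition ${}_BA = {}_BB \oplus {}_BM$ furnished by the split structure.

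I do not anticipate any genuine obstacle here: the argument is pure bookkeeping, since a trivial extension is the prototypical split algebra extension. The only point deserving a brief check is that the bimodule algebra product on $M$ is zero, and this is harmless because Proposition~\ref{Main-for-split-extension} depends only on the projective dimension, tensor nilpotence, and $\Tor$-vanishing properties of $M$, none of which involve the bimodule algebra product.
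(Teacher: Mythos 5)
Your proposal is correct and matches the paper's (implicit) argument exactly: the paper sets up trivial extensions as the split algebra extensions with zero bimodule-algebra product precisely so that Proposition~\ref{Main-for-split-extension} specialises verbatim, with $A/B\cong M$ translating the three conditions of Definition~\ref{def-bound-exten} and the splitting ${}_BA={}_BB\oplus{}_BM$ guaranteeing the hypothesis of Theorem~\ref{main-thm-def2}. No gaps.
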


\begin{Exam}\label{exam-1}
Let $A$ be an admissible quotient $\bfk Q/I$ of a path algebra $\bfk Q$ over a field $\bfk$.
Consider an arrow $\alpha$ in $Q$ such that $\alpha$ does not occur in a minimal generating set
of $I$. The {\it arrow removal algebra} $B =A/\langle \overline{\alpha} \rangle$ of $A$
was investigated in \cite{CLMS20, DS08, EPS22, GPS21}. Indeed, it follows from \cite[Theorem A]{GPS21} that $A=B\ltimes M$ where
$M$ is projective as a $B$-$B$-bimodule and $M\otimes _BM=0$.
Therefore, $B\subset A$ is a bounded split extension.
Applying Theorem~\ref{main-thm-semt}, \ref{main-thm-semt2}, \ref{main-thm-def2},
\ref{main-cor-semt-1}(1) and \ref{main-cor-semt-2}(2),
we reobtain \cite[Theorem A (ii)]{GPS21} and the main result of \cite{EPS22}.
We mention that the Auslander-Reiten conjecture, Gorenstein defect categories and stable categories of
Gorenstein projective modules between $A$ and $B$
were investigated in \cite[Corollary I]{QS23}, which can also be deduced by
Theorem~\ref{main-thm-def2} and Theorem~\ref{main-cor-semt-2}(1).
\end{Exam}

Let $\Lambda$ and $\Gamma$ be two finite dimensional algebras, $_{\Gamma}W_{\Lambda}$ be a $\Gamma$-$\Lambda$-bimodule
and
$A=\left(\begin{smallmatrix}
\Lambda & 0 \\
{_{\Gamma}W_{\Lambda}} & \Gamma                                           \end{smallmatrix}\right)$
be a triangular matrix algebra. In \cite{Lu17}, Lu proved that
if $W$ is projective as a $\Gamma$-$\Lambda$-bimodule, then there are triangle equivalences
$$\Sg(A) \simeq \Sg(\Lambda)\coprod  \Sg(\Gamma)\  \mathrm{and}\ \Gdef(A)\simeq \Gdef(\Lambda)\coprod \Gdef(\Gamma) $$
(see the proof of \cite[Proposition 4.2 and Theorem 4.4]{Lu17}). Now we will generalize
these results as follows.

\begin{Prop}\label{prop-tri-matrix-alg}
Keep the above notations.
If $\pd({}_\Gamma W_ \Lambda)<\infty$,
then there are triangle equivalences:
$$\Sg(A) \simeq \Sg(\Lambda)\coprod  \Sg(\Gamma),  \stgp{A}\simeq \stgp{\Lambda}\coprod \stgp{\Gamma},\  \mbox{and} \ \Gdef(A)\simeq \Gdef(\Lambda)\coprod \Gdef(\Gamma).$$
\end{Prop}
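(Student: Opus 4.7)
The plan is to realize $A$ as a trivial (hence split) extension of $B := \Lambda \times \Gamma$ and then apply the main theorems. Via the usual identification of $B$ with the diagonal subalgebra $\left(\begin{smallmatrix}\Lambda & 0 \\ 0 & \Gamma\end{smallmatrix}\right) \subset A$, one checks directly from the matrix multiplication that $A \cong B \ltimes W$ as algebras, where $W$ is viewed as a $B$-$B$-bimodule with left action given by the projection $B \twoheadrightarrow \Gamma$ and right action by $B \twoheadrightarrow \Lambda$. In particular the inclusion $B \subset A$ is split, and $A/B \cong W$ as $B$-bimodules.

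The next step is to verify that $B \subset A$ satisfies the three conditions of Definition~\ref{def-bound-exten}, which allows us to use Proposition~\ref{Main-for-split-extension} (or equivalently Proposition~\ref{prop-trivial-extension}). The key observation is that a $B$-$B$-bimodule decomposes according to the orthogonal central idempotents of $B = \Lambda \times \Gamma$ into four blocks, and $W$ sits entirely in the $(\Gamma,\Lambda)$-block. Hence the projective dimension of ${}_B W_B$ equals that of ${}_\Gamma W_\Lambda$ as a $\Gamma$-$\Lambda$-bimodule, which is finite by hypothesis. Moreover, the right $B$-action on the first tensor factor in $W \otimes_B W$ lands in $\Lambda$, while the left $B$-action on the second factor lands in $\Gamma$; since these are orthogonal in $B$, we get $W \otimes_B W = 0$, so in particular $W^{\otimes_B p} = 0$ for all $p \geq 2$. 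This same block argument, applied to a right $\Lambda$-projective resolution of $W$ (viewed as right $B$-modules with zero $\Gamma$-action), shows that $P_i \otimes_B W = 0$ for every term $P_i$, giving $\Tor_i^B(W, W) = 0$ for all $i \geq 1$. For $j \geq 2$ the Tor groups vanish trivially since $W^{\otimes_B j} = 0$.

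Therefore $B \subset A$ is a bounded split extension. By Theorem~\ref{main-thm-semt} we obtain a singular equivalence $\Sg(A) \simeq \Sg(B)$, and by Theorem~\ref{main-thm-def2}, using that the extension is split, we also obtain the equivalences $\stgp{A} \simeq \stgp{B}$ and $\Gdef(A) \simeq \Gdef(B)$. To finish, use the product decomposition $B = \Lambda \times \Gamma$: the module category of a finite product of algebras is the coproduct of the individual module categories, and this decomposition descends to each of the three triangulated invariants, yielding
$$\Sg(B) \simeq \Sg(\Lambda) \coprod \Sg(\Gamma), \quad \stgp{B} \simeq \stgp{\Lambda} \coprod \stgp{\Gamma}, \quad \Gdef(B) \simeq \Gdef(\Lambda) \coprod \Gdef(\Gamma).$$
Combining these with the equivalences from Theorem~\ref{main-thm-semt} and Theorem~\ref{main-thm-def2} completes the proof.

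There is no real obstacle; the only slightly delicate step is the Tor-vanishing verification, but it reduces to the observation that the two actions of $B$ on $W$ factor through the two orthogonal factors $\Lambda$ and $\Gamma$ of $B$, which immediately forces every mixed tensor product over $B$ to vanish.
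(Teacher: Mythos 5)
Your proof is correct and follows essentially the same route as the paper: realize $A$ as the trivial extension $(\Lambda\times\Gamma)\ltimes W$, check the bounded-extension conditions via the block decomposition forced by the orthogonal idempotents of $\Lambda\times\Gamma$, apply Proposition~\ref{prop-trivial-extension}, and decompose the invariants of the product algebra. The paper's proof is just a terser version of this, asserting the tensor- and Tor-vanishing without the idempotent argument you spell out.
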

\begin{proof}
It is clear that $$A=\left(\begin{smallmatrix}
\Lambda & 0 \\
{_{\Gamma}W_{\Lambda}} & \Gamma                                           \end{smallmatrix}\right)
\cong (\Lambda \times \Gamma)\ltimes W,  W\mathop{\otimes}\limits_{\Gamma\times \Lambda}W=0, \ \mathrm{and}\
 \Tor _i^{\Lambda\times \Gamma}(W, W)=0,\  \mathrm{for}\ i\geq 1.$$ Therefore,
the statement follows from Proposition~\ref{prop-trivial-extension}.
\end{proof}

Now we will illustrate our results by two  examples.
\begin{Exam}\label{exam-2}
{\rm Let $\Lambda = \bfk Q/I$ be the algebra where $Q$ is the quiver
$$\xymatrix{1 \ar@<+1ex>[r]^\beta  \ar@(ul,dl)_\gamma & 2 \ar@<+1ex>[l]^\alpha }$$
and $I=\langle \gamma ^2, \alpha \beta\rangle$. We write the concatenation of paths from the right to the left.
Let $\Gamma=\Lambda/\Lambda \alpha \Lambda$. Then $\Gamma$ is the quiver
$$\xymatrix{1 \ar[r]^\beta  \ar@(ul,dl)_\gamma & 2  }$$
with relation $\{ \gamma ^2\}$, and $\Gamma$ is a subalgebra of
$\Lambda$. Therefore, $\Lambda \alpha \Lambda$ can be viewed as a $\Gamma$-$\Gamma$-bimodule,
and there is an algebra isomorphism $\Lambda\cong\Gamma \ltimes \Lambda \alpha \Lambda$
mapping $\overline{\varepsilon}_i$ to $(\overline{\varepsilon}_i,0)$, $\overline{\gamma}$ to $(\overline{\gamma},0)$,
$\overline{\beta}$ to $(\overline{\beta},0)$ and $\overline{\alpha}$ to $(0,\overline{\alpha})$,
where $\varepsilon _i$ is the trivial path at $i$. It is easy to check that $\Lambda \alpha \Lambda _\Gamma
\cong (e_2\Gamma/\rad(e_2\Gamma))^4$ and $_\Gamma \Lambda \alpha \Lambda
\cong \Gamma e_1$. Therefore, we get $\Lambda \alpha \Lambda \otimes _\Gamma \Lambda \alpha \Lambda =0$
and $\Tor _i^\Gamma(\Lambda \alpha \Lambda, \Lambda \alpha \Lambda)=0$ for each $i\geq 1$.
Now we claim that $\pd _{\Gamma^e}(\Lambda \alpha \Lambda)<\infty$, and then
we get triangle equivalences
$\Sg(\Lambda) \simeq \Sg(\Gamma)$ and $\Gdef(\Lambda)\simeq \Gdef(\Gamma)$ by Proposition~\ref{prop-trivial-extension}.
Indeed, the enveloping algebra $\Gamma^e$ has the following quiver
\[\scriptsize \xymatrix@C=1.5cm@R=1.5cm{
  1\times 1^{\mathrm{op}}
  \ar@(ul,dl)_{\gamma\otimes 1^{\mathrm{op}}}
  \ar@(ul,ur)^{1\otimes \gamma^{\mathrm{op}}}
  \ar[r]^{\beta\otimes 1^{\mathrm{op}}}
  \ar@{<-}[d]_{1\otimes\beta^{\mathrm{op}}}
& 2\times 1^{\mathrm{op}}
  \ar@(ul,ur)^{2\otimes\gamma^{\mathrm{op}}}
  \ar@{<-}[d]_{2\otimes\beta^{\mathrm{op}}}
\\1\times 2^{\mathrm{op}}
  \ar[r]_{\beta\otimes 2^{\mathrm{op}}}
  \ar@(ul,dl)_{\gamma\otimes 2^{\mathrm{op}}}
& 2\times 2^{\mathrm{op}} }\]
with relations \small{$\{ (\gamma\otimes 1^{\mathrm{op}})^2, (1\otimes \gamma^{\mathrm{op}})^2,
(2\otimes\gamma^{\mathrm{op}})^2, (\gamma\otimes 2^{\mathrm{op}})^2,
(2\otimes\beta^{\mathrm{op}})(\beta\otimes 2^{\mathrm{op}})-(\beta\otimes 1^{\mathrm{op}})(1\otimes\beta^{\mathrm{op}}),
(\gamma\otimes 1^{\mathrm{op}})(1\otimes\beta^{\mathrm{op}})-(1\otimes\beta^{\mathrm{op}})(\gamma\otimes 2^{\mathrm{op}}),
(2\otimes\gamma^{\mathrm{op}})(\beta\otimes 1^{\mathrm{op}})-(\beta\otimes 1^{\mathrm{op}})(1\otimes \gamma^{\mathrm{op}}),
(\gamma\otimes 1^{\mathrm{op}})(1\otimes \gamma^{\mathrm{op}})-(1\otimes \gamma^{\mathrm{op}})(\gamma\otimes 1^{\mathrm{op}})\}$.}
The $\Gamma^e$-module $\Lambda \alpha \Lambda$ is given by the following representation:

\medskip

\[\scriptsize \xymatrix@C=1.5cm@R=1.5cm{
  0
  \ar@(ul,dl)
  \ar@(ul,ur)
  \ar[r]
  \ar@{<-}[d]
& 0
  \ar@(ul,ur)
  \ar@{<-}[d]
\\ \bfk^2
  \ar[r]_{\left(\begin{array}{cc} 1 & 0 \\ 0 & 1  \end{array}\right)}
  \ar@(ul,dl)_{ \left(\begin{array}{cc} 0 & 0 \\ 1 & 0  \end{array}\right)}
& \bfk^2 \ .}\]
There is an exact sequence
$$0\rightarrow \Gamma^ee_{1\times 1^{\mathrm{op}}}\rightarrow \Gamma^ee_{1\times 2^{\mathrm{op}}}
\rightarrow \Lambda \alpha \Lambda \rightarrow 0$$
of $\Gamma^e$-modules, and thus $\pd (_{\Gamma^e}\Lambda \alpha \Lambda)=1$.

}
\end{Exam}

\begin{Exam} This (non-)example is taken from \cite[Example 5.5]{PSS14}.

Consider the triangular matrix algebra
$$A=\left(\begin{matrix}
\bfk & 0 \\
\bfk& \bfk[x]/(x^2)                                          \end{matrix}\right).$$
Since the bimodule $W=\bfk$ is not a $\bfk[x]/(x^2)$-$\bfk$-bimodule of finite projective dimension, the algebra extension
$B=\bfk\times \bfk[x]/(x^2)\subset A$ is NOT a bounded extension.  In fact,     $B$ is a Gorenstein algebra, while $A$ is NOT, so their Gorenstein defect categories are not equivalent, although the extension $B\subset A$ splits.

\end{Exam}
\noindent
{\bf Acknowledgments.} The first author was supported by the National Natural Science Foundation of China (No. 12061060, 11961007),
by the project of Young and Middle-aged Academic and Technological
leader of Yunnan (Grant No. 202305AC160005) and by Yunnan Key Laboratory of Modern Analytical Mathematics and Applications (No. 202302AN360007).
The second and the fourth authors were supported by the National Natural Science Foundation of China (No. 12071137), by Key Laboratory of Mathematics and Engineering Applications (Ministry of Education), by Shanghai Key Laboratory of PMMP (No. 22DZ2229014) and by Fundamental Research Funds for the Central Universities.

 We thank Hongxing Chen,  Xiao-Wu Chen, Claude Cibils,  Changjian Fu, Henning Krause, Andrea Solotar,  Ren Wang   for helpful remarks.

\noindent
{\bf Declaration of interests.} The authors have no conflicts of interest to disclose.

\noindent
{\bf Data availability.} No new data were created or analyzed in this study.

\vspc

\end{document}